\newtheorem{thm}{Theorem}[section]
\newtheorem{cor}[thm]{Corollary}
\newtheorem{lem}[thm]{Lemma}
\theoremstyle{definition}
\newtheorem{exa}[thm]{Example}
\newtheorem{rk}[thm]{Remark}
\newcommand{\chr}{{\mathsf{c}}}
\newcommand{\Proj}{\mathrm{Proj}}
\newcommand{\mt}{\mathsf{t}}
\newcommand{\co}{\colon}
\newcommand{\id}{\mathrm{id}}
\newcommand{\opp}{\mathrm{op}}
\newcommand{\cc}{\mathcal{C}}
\newcommand{\bb}{\mathcal{B}}
\newcommand{\aaa}{\mathcal{A}}
\newcommand{\zz}{\mathcal{Z}}
\newcommand{\kk}{\Bbbk}
\newcommand{\N}{\mathbb{N}}
\newcommand{\un}{\mathbb{1}}
\newcommand{\End}{\mathrm{End}}
\newcommand{\Hom}{\mathrm{Hom}}
\newcommand{\lev}{\mathrm{ev}}
\newcommand{\rev}{\widetilde{\mathrm{ev}}}
\newcommand{\lcoev}{\mathrm{coev}}
\newcommand{\rcoev}{\widetilde{\mathrm{coev}}}
\newcommand{\ldual}[1]{\leftidx{^\vee}{\!#1}{}}
\newcommand{\rdual}[1]{{#1}^\vee}
\newcommand{\lldual}[1]{\leftidx{^{\vee\vee}}{\!#1}{}}
\newcommand{\rrdual}[1]{{#1}^{\vee\vee}}
\newcommand{\bdual}[1]{{#1}^{**}}
\newcommand{\rsdraw}[3]{\raisebox{-#1\height}{\scalebox{#2}{\includegraphics{#3.eps}}}}
\newcommand{\labeli}{\renewcommand{\labelenumi}{{\rm (\roman{enumi})}}}
\providecommand{\bysame}{\leavevmode\hbox to3em{\hrulefill}\thinspace}
\begin{document}

\title{Chromatic maps for finite tensor categories}

\author[F. Costantino]{Francesco Costantino}
\address{Institut de Math\'ematiques de Toulouse\\
118 route de Narbonne\\
 Toulouse F-31062}
\email{francesco.costantino@math.univ-toulouse.fr}

\author[N. Geer]{Nathan Geer}
\address{Mathematics \& Statistics\\
  Utah State University \\
  Logan, Utah 84322, USA}
\email{nathan.geer@gmail.com}

\author[B. Patureau-Mirand]{Bertrand Patureau-Mirand} \address{Univ
  Bretagne Sud, CNRS UMR 6205, LMBA, F-56000 Vannes, France}
\email{bertrand.patureau@univ-ubs.fr}

\author[A. Virelizier]{Alexis Virelizier} \address{Univ. Lille, CNRS, UMR 8524 - Laboratoire Paul Painlev\'e, F-59000 Lille, France}
\email{alexis.virelizier@univ-lille.fr}

\subjclass[2020]{18M05, 57K31, 57K16}
\date{\today}

\begin{abstract}
Chromatic maps for spherical tensor categories are instrumental tools to construct (non semisimple) invariants of 3-manifolds and their extension to (non compact) (2+1)-TQFTs. In this paper, we introduce left and right chromatic maps for finite tensor categories and prove that such maps always exist. As a corollary, we obtain that any spherical finite tensor category has a chromatic map.
\end{abstract}

\maketitle

\setcounter{tocdepth}{1}
\tableofcontents

\section{Introduction}\label{sect-intro}
During the last 30 years, deep relations have emerged between low-dimensional topology and the theory of monoidal categories.
In particular, monoidal categories carrying appropriate additional structures (such as pivotality, linearity, finiteness, braidings, etc...) have been used to construct topological invariants of 3-manifolds and (2+1)-TQFTs, see for example~\cite{Tu0,TVi5} for the semisimple approach, \cite{KL} for a first non semisimple approach, and~\cite{CGPT20,CGPV23} for a new non semisimple approach. In this latter non semisimple approach,  the main algebraic tools used for the topological constructions are the chromatic maps whose definition involves the (non degenerate) modified trace on the ideal of projective objects of a spherical finite tensor category. The chromatic maps are also a key ingredient for the definition of the skein (3+1)-TQFTs of \cite{CGHP}. (In both cases, the chromatic
maps play the role of so called ``Kirby colors'' in the surgery semisimple approach  see Example~\ref{ex-spherical-fusion} or of the integral for Hennings invariants, see Example~\ref{ex-Hopf-chromatic}.)

In this paper, given a finite tensor category $\cc$ (in the sense of~\cite{EGNO}), we introduce right and left chromatic maps for $\cc$. Their definition involves the distinguished invertible object of $\cc$ (see Section~\ref{subsect-chromatic}). Our main result (Theorem~\ref{thm-chromatic}) is that left and right chromatic maps for $\cc$ always exist. The proof of this result uses the central Hopf monad (which describes the center of $\cc$, see~\cite{BV3}) and the existence and uniqueness of (co)integrals based at the distinguished invertible object of $\cc$ (see~\cite{Sh2}).

As a corollary, we get  (Corollary~\ref{thm-chromatic}) that any spherical finite tensor category has a chromatic map in the sense of~\cite{CGPT20,CGPV23}. In particular, this proves that the construction of invariants of closed 3-manifolds given in~\cite{CGPT20} and their extension to non-compact (2+1)-TQFTs given in~\cite{CGPV23} can be performed starting with any spherical finite tensor category.

The paper is organized as follows. In Section~\ref{sect-chromatic}, after some algebraic preliminaries, we introduce right and left chromatic maps for a finite tensor category $\cc$ and state their existence. Next, when $\cc$ is spherical, we relate them to the chromatic maps in the sense of~\cite{CGPT20,CGPV23}. Section~\ref{sect-proofs} is dedicated to the proofs of the above results. In particular, we recall the notion of a Hopf monad and its based (co)integrals as well as the construction (in terms of coends) of the central Hopf monad which are used in the proof of the main result.

\section{Chromatic maps}\label{sect-chromatic}

Throughout the paper, we fix an algebraically closed field~$\kk$.

\subsection{Conventions on monoidal categories}\label{sect-conventions}
For the basics on monoidal categories, we refer for example to \cite{ML1,EGNO,TVi5}. We  will suppress in our formulas  the associativity  and unitality  constraints   of monoidal categories.   This does not lead  to   ambiguity because by   Mac Lane's   coherence theorem,   all legitimate ways of inserting these constraints   give the same result.
For  any objects $X_1,...,X_n$ with $n\geq 2$, we set
$$
X_1 \otimes X_2 \otimes \cdots \otimes X_n=(... ((X_1\otimes X_2) \otimes X_3) \otimes
\cdots \otimes X_{n-1}) \otimes X_n
$$
and similarly for morphisms.

Recall that a monoidal category is \emph{rigid} if every object admits a left dual and a right dual. Subsequently, when dealing with rigid categories, we shall always assume tacitly that for each object $X$, a left dual $\ldual{X}$ and a right dual $\rdual{X}$  has been chosen, together with their (co)evaluation morphisms
\begin{align*}
& \lev_X\co \ldual{X} \otimes X \to \un , && \lcoev_X \co \un \to X \otimes \ldual{X},\\
& \rev_X\co X \otimes \rdual{X} \to \un, && \rcoev_X \co \un \to \rdual{X} \otimes X.
\end{align*}

A \emph{pivotal category} is a rigid monoidal category with a choice of left and right duals for objects so that the induced  left and right dual functors coincide as monoidal functors. Then we write $X^*=\ldual{X}=\rdual{X}$ for any $X \in \cc$, and
$$\phi=\{\phi_X=(\id_{X^{**}} \otimes \lev_X)(\rcoev_{X^*} \otimes \id_X)\co X \to X^{**}\}_{X \in \cc}$$ is a
monoidal natural isomorphism relating the (co)evaluation morphisms, called the \emph{pivotal structure} of $\cc$.

\subsection{Projective objects and covers}\label{sect-abelian}
An object $P$ of an abelian category $\cc$ is \emph{projective} if the functor $\Hom(P,-)\colon \cc \to \mathrm{Set}$
preserves epimorphisms.  An abelian category has \emph{enough projectives} if every object has an epimorphism from a projective object onto it.

A \emph{projective cover} of an object $X$ of an abelian category $\cc$ is a projective object $P(X)$ of $\cc$ together with an epimorphism $p \co P(X) \to X$ such that if $g \co P \to X$ is an epimorphism from a projective object $P$ to $X$, then there exists an epimorphism $h \co P \to P(X)$ such that $ph = g$. Note that if it exists, a projective cover is unique up to a non-unique isomorphism, and a projective cover of a simple object is indecomposable.

\subsection{Finite tensor categories}\label{sect-finite-tensor}
Recall that a monoidal category is \emph{$\kk$-linear} if each hom-set carries a structure of a $\kk$-vector space so that  the composition and monoidal product of morphisms are $\kk$-bilinear.

Following \cite{EGNO}, a \emph{finite tensor category} (over $\kk$) is a $\kk$-linear rigid monoidal abelian category $\cc=(\cc,\otimes,\un)$ such that:
\begin{enumerate}
\labeli
\item the category $\cc$ is locally finite: the hom-sets are finite dimensional and every object of~$\cc$ has finite length,
\item the category $\cc$ has enough projectives,
\item there are finitely many isomorphism classes of simple objects,
\item $\End_\cc(\un)=\kk\, \id_\un$.
\end{enumerate}

Let $\cc$ be a finite tensor category. Then the unit object $\un$ of $\cc$ is simple (see \cite[Theorem 4.3.8]{EGNO}). Also, every simple object of $\cc$ has a projective cover, and any indecomposable projective object $P$ of $\cc$ has a unique simple subobject, called the \emph{socle} of $P$ (see \cite[Remark 6.1.5]{EGNO}). In particular, the socle of the projective cover of the unit object $\un$ is an invertible object called the \emph{distinguished invertible} object of $\cc$.
Finally note that $\cc$ has a \emph{projective generator}, that is, a projective object $G$ such that the functor $\Hom_\cc(G,-)\co \cc \to \mathrm{Set}$ is faithful or, equivalently, such that any projective object is a retract of $G^{\oplus n}$ for some $n\in\N$. 

\subsection{Chromatic maps}\label{subsect-chromatic}
Let $\cc$ be a finite tensor category. Pick a projective cover $\varepsilon\co P_0 \to \un$  of the unit object and a  monomorphism $\eta\co \alpha \to P_0$, where $\alpha$ is the distinguished invertible object of $\cc$.

\begin{lem}\label{lem-Lambda}
There are unique natural transformations
$$\Lambda^r=\{\Lambda_X^r \co \alpha \otimes X \to X\}_{X \in \cc}
\quad \text{and} \quad
\Lambda^l=\{\Lambda_X^l \co X \otimes \alpha \to X\}_{X \in \cc}
$$
such that for any indecomposable projective object $P$ non isomorphic to $P_0$,
$$
\Lambda^r_P=0, \quad  \Lambda^l_P=0,  \quad \text{and} \quad \Lambda^r_{P_0}=\eta \otimes \varepsilon, \quad \Lambda^l_{P_0}=\varepsilon \otimes \eta.
$$
\end{lem}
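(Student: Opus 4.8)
The plan is to use the fact that $\cc$ has a projective generator and that natural transformations out of a functor restricted to projectives are determined by their values on the indecomposable projective summands of the generator. More precisely, I would first record the standard fact that, since every projective object is a retract of $G^{\oplus n}$ and every indecomposable projective is (up to isomorphism) a direct summand of $G$, a natural transformation between two $\kk$-linear functors that are additive, once specified on each indecomposable projective in a way compatible with morphisms between them, extends uniquely to all projectives; this already gives uniqueness of $\Lambda^r$ and $\Lambda^l$ on the full subcategory $\Proj(\cc)$ of projectives. The key input that lets one pass from $\Proj(\cc)$ to all of $\cc$ is that the functors in play — namely $X \mapsto \alpha \otimes X$, $X\mapsto X\otimes \alpha$, and $X\mapsto X$ — are exact (tensoring with the invertible object $\alpha$ is exact), so that a natural transformation defined on projectives extends uniquely along the canonical presentations $P_1 \to P_0 \to X \to 0$ of arbitrary objects by a right-exactness argument. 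I would carry this out only for $\Lambda^r$; the case of $\Lambda^l$ is symmetric (or can be obtained by applying the construction in the monoidal category $\cc^{\otimes\opp}$, whose distinguished invertible object is the same $\alpha$).

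First I would set up the data on indecomposable projectives: for $P_0$ define $\Lambda^r_{P_0} = \eta \otimes \varepsilon \co \alpha \otimes P_0 \to \un \otimes P_0 = P_0$ — wait, that types to $\alpha\otimes P_0 \to P_0\otimes\un$, so more precisely $\Lambda^r_{P_0}=(\id_{P_0}\otimes\varepsilon)\circ(\text{swap-free composite})$; in the paper's notation $\eta\otimes\varepsilon$ is read as a morphism $\alpha\otimes P_0 \to P_0$ after suppressing unit constraints, and for any indecomposable projective $P\not\cong P_0$ set $\Lambda^r_P = 0$. Then extend additively to arbitrary projectives via a chosen decomposition into indecomposables. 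The first real step is to check that this is natural with respect to all morphisms $f\co P\to Q$ between projectives, i.e. $f\circ\Lambda^r_P = \Lambda^r_Q\circ(\id_\alpha\otimes f)$. By additivity it suffices to treat $f$ between indecomposables; the only nontrivial case is $f\co P_0\to P_0$, where one must show $f\circ(\eta\otimes\varepsilon) = (\eta\otimes\varepsilon)\circ(\id_\alpha\otimes f)$, and the mixed cases $P_0\to P$ or $P\to P_0$ with $P\not\cong P_0$ indecomposable, where one side is zero and one must show the other side vanishes too.

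The main obstacle will be exactly this naturality check on $\End_\cc(P_0)$ and on $\Hom_\cc(P_0,P)$, $\Hom_\cc(P,P_0)$. For $f\in\End_\cc(P_0)$: since $P_0$ is indecomposable its endomorphism ring is local, so $f$ is either an isomorphism or nilpotent; if $f$ is an isomorphism it must be a scalar on the simple quotient $\un$ and on the socle $\alpha$ (by Schur, using $\End_\cc(\un)=\kk\,\id_\un$), and one checks the two composites agree because $\varepsilon\circ f = \lambda\,\varepsilon$ and $f\circ\eta = \lambda\,\eta$ for the same scalar $\lambda$ — this uses that $\eta$ lands in the socle and $\varepsilon$ is the cover map, together with the fact that $\varepsilon\eta = 0$ forces the relation to be consistent; if $f$ is nilpotent then its image misses the socle, so $f\circ\eta$ factors through a map $\alpha\to\operatorname{rad}P_0$ which, composed with $\varepsilon$... here one shows both composites are zero because $\varepsilon\circ f$ kills $\alpha$'s contribution and $f\circ\eta=0$ (a nonzero nilpotent endomorphism restricted to the simple socle is zero). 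For a map $g\co P_0\to P$ with $P\not\cong P_0$ indecomposable, $g$ is not surjective onto a cover so $\varepsilon$-type reasoning shows $\Lambda^r_P\circ(\id_\alpha\otimes g)=0=g\circ\Lambda^r_{P_0}$ after noting $g\circ\eta=0$ since $g$ kills the socle of $P_0$ (its image, being a proper subobject contained in the radical of the indecomposable $P$... ); dually for $P\to P_0$. Once naturality on $\Proj(\cc)$ is secured, the final step — extending to all of $\cc$ and proving uniqueness there — is routine: given any $X$, pick a projective presentation $P_1\xrightarrow{u} P_0'\xrightarrow{p} X\to 0$, apply the exact functors $\alpha\otimes(-)$ and $\id$, and define $\Lambda^r_X$ as the unique morphism with $\Lambda^r_X\circ(\id_\alpha\otimes p) = p\circ\Lambda^r_{P_0'}$, which exists and is unique because $p$ is epi and $\Lambda^r_{P_0'}\circ(\id_\alpha\otimes u)$ composed with... equals $p\circ\Lambda^r_{P_0'}\circ(\id_\alpha\otimes u) = \Lambda^r$-image lands in the kernel; naturality in $X$ and independence of the presentation then follow by the usual diagram chase, and uniqueness overall follows since any candidate must agree on projectives hence on all presentations.
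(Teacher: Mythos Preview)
Your strategy is essentially the paper's: check naturality on morphisms between indecomposable projectives, extend additively to $\Proj$, then extend to all of $\cc$ using that $\alpha\otimes(-)$ is right exact and $\cc$ has enough projectives. The paper isolates the last step as a stand-alone extension lemma and dispatches the three naturality checks by citing \cite[Lemma~4.3]{GKP22}; you attempt them by hand.

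Two of your hand-done naturality arguments have gaps. For $f\in\End_\cc(P_0)$ an isomorphism you assert $\varepsilon f=\lambda\varepsilon$ and $f\eta=\lambda\eta$ with the \emph{same} scalar $\lambda$, invoking $\varepsilon\eta=0$; but $\varepsilon\eta$ need not vanish (e.g.\ when $\un$ is projective so $P_0=\un$), and in any case that relation does not force the two scalars to agree. The fix: $\End_\cc(P_0)$ is a finite-dimensional local $\kk$-algebra with residue field $\kk$, so write $f=c\,\id_{P_0}+n$ with $n$ nilpotent, and then your own nilpotent-case reasoning gives $\varepsilon n=0$ and $n\eta=0$, whence both scalars equal $c$ (equivalently: there is a unique $\kk$-algebra homomorphism $\End_\cc(P_0)\to\kk$). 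For $g\co P_0\to P$ with $P\not\cong P_0$ indecomposable, the observation that the image of $g$ lies in $\mathrm{rad}\,P$ does not by itself yield $g\eta=0$. A correct argument: $g$ cannot be a monomorphism (projectives in a finite tensor category are injective, so a monomorphism $P_0\hookrightarrow P$ would split off), hence $\ker g\neq 0$ and therefore contains the simple socle $\alpha$ of $P_0$; alternatively, $\Hom_\cc(\alpha,P)=0$ because the simple socle of $P$ is not isomorphic to $\alpha$ by the Nakayama bijection on indecomposable projectives.
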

We prove Lemma~\ref{lem-Lambda} in Section~\ref{proof-lem-Lambda}.

Let $P$ be a projective object and $G$ be a projective generator of~$\cc$.  A \emph{right chromatic map} based at $P$ for $G$ is a morphism
$$
\chr_P^r\in\Hom_\cc(P \otimes \rrdual{G}, P \otimes G \otimes \alpha )
$$
such that for all $X\in\cc$,
$$
(\id_P \otimes \rev_G \otimes  \id_X)(\id_{P \otimes G} \otimes \Lambda^{r}_{\rdual{G} \otimes X})(\chr_P^r \otimes \id_{\rdual{G} \otimes X}) (\id_P \otimes \rev_{\rdual{G}} \otimes \id_X)=\id_{P \otimes X}.
$$
Using graphical calculus for monoidal categories (with the convention of diagrams to be read from bottom to top), the latter condition depicts as:
$$
\psfrag{P}[Br][Br]{\scalebox{.8}{$P$}}
\psfrag{G}[Br][Br]{\scalebox{.8}{$G$}}
\psfrag{a}[Br][Br]{\scalebox{.8}{$\alpha$}}
\psfrag{H}[Bl][Bl]{\scalebox{.8}{$\rrdual{G}$}}
\psfrag{X}[Bl][Bl]{\scalebox{.8}{$X$}}
\psfrag{Q}[Bl][Bl]{\scalebox{.8}{$P$}}
\psfrag{R}[Br][Br]{\scalebox{.8}{$\rdual{G}$}}
\psfrag{u}[Bc][Bc]{\scalebox{.9}{$\rev_{G}$}}
\psfrag{e}[Bc][Bc]{\scalebox{.9}{$\rcoev_{\rdual{G}}$}}
\psfrag{c}[Bc][Bc]{\scalebox{.9}{$\chr_P^r$}}
\psfrag{A}[Bc][Bc]{\scalebox{.9}{$\Lambda^r_{\rdual{G} \otimes X}$}}
\rsdraw{.45}{.9}{cpr-def}\; =\;\;\, \rsdraw{.45}{.9}{cpr-def-2}\;.
$$
Similarly, a \emph{left chromatic map} based at $P$ for $G$ is a morphism
$$
\chr_P^l\in\Hom_\cc(\lldual{G} \otimes P, \alpha \otimes G \otimes P)
$$
such that for all $X\in\cc$,
$$
(\id_X \otimes \lev_G \otimes \id_P)(\Lambda^l_{X \otimes \ldual{G}} \otimes \id_{G \otimes P})(\id_{X \otimes \ldual{G}} \otimes \chr_P^l) (\id_X \otimes \lcoev_{\ldual{G}} \otimes \id_P)=\id_{X \otimes P}.
$$
This condition depicts as:
$$
\psfrag{P}[Bl][Bl]{\scalebox{.8}{$P$}}
\psfrag{G}[Bl][Bl]{\scalebox{.8}{$G$}}
\psfrag{a}[Bl][Bl]{\scalebox{.8}{$\alpha$}}
\psfrag{H}[Br][Br]{\scalebox{.8}{$\lldual{G}$}}
\psfrag{X}[Br][Br]{\scalebox{.8}{$X$}}
\psfrag{Y}[Bl][Bl]{\scalebox{.8}{$X$}}
\psfrag{Q}[Bl][Bl]{\scalebox{.8}{$P$}}
\psfrag{R}[Bl][Bl]{\scalebox{.8}{$\ldual{G}$}}
\psfrag{u}[Bc][Bc]{\scalebox{.9}{$\lev_{G}$}}
\psfrag{e}[Bc][Bc]{\scalebox{.9}{$\lcoev_{\ldual{G}}$}}
\psfrag{c}[Bc][Bc]{\scalebox{.9}{$\chr_P^l$}}
\psfrag{A}[Bc][Bc]{\scalebox{.9}{$\Lambda^l_{X \otimes \ldual{G}}$}}
\rsdraw{.45}{.9}{cpl-def}\; =\;\;\, \rsdraw{.45}{.9}{cpl-def-2}\;.
$$

The main result of the paper is the existence of right and left chromatic maps for finite tensor categories:
\begin{thm}\label{thm-chromatic}
For any projective object $P$ and any projective generator $G$ of~$\cc$, there are a right chromatic map and a left chromatic map based at $P$ for $G$.
\end{thm}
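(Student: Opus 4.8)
The plan is to build the chromatic maps from two ingredients: the $\alpha$-based (co)integrals of $\cc$ — which, up to normalization, are the natural transformations $\Lambda^r,\Lambda^l$ of Lemma~\ref{lem-Lambda}, existing by \cite{Sh2} — and the central Hopf monad $Z$ of $\cc$ from \cite{BV3}, whose coend presentation packages the ``for all $X$'' clause in the definition of a chromatic map and whose finiteness supplies the nondegeneracy making that clause solvable.

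First I would recall the central Hopf monad. As an endofunctor, $Z(X)=\int^{Y\in\cc}\rdual Y\otimes X\otimes Y$, with dinatural structure morphisms $i_Y(X)\co\rdual Y\otimes X\otimes Y\to Z(X)$; it carries a Hopf monad structure (monad multiplication and unit, antipodes) whose Eilenberg--Moore category is the center $\zz(\cc)$, compatibly with the forgetful functors to $\cc$. Since $\cc$ is a finite tensor category, so is $\zz(\cc)$; hence $Z$ is a finite Hopf monad, and by \cite{Sh2} it possesses a cointegral $\lambda$, unique up to an invertible scalar and satisfying the cointegral axiom. In parallel I would record that $\Lambda^r$ (resp.\ $\Lambda^l$) is, up to normalization, the right (resp.\ left) $\alpha$-based integral of $\cc$ in the sense of \cite{Sh2}; in particular it is nonzero, and its nondegeneracy reflects the Frobenius property of finite tensor categories.

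Next comes the translation, which is the heart of the argument. Fix a projective generator $G$ and a projective object $P$. Feeding $\lambda$, the dinatural map $i_{\rdual G}$ and the (co)evaluations of $G$ and $\rdual G$ into a single diagram — with $P$ entering as the object at which the chromatic map is based — one reads off a candidate $\chr_P^r\in\Hom_\cc(P\otimes\rrdual G,P\otimes G\otimes\alpha)$; the left chromatic map $\chr_P^l$ is produced symmetrically, or from $\chr_P^r$ by applying the antipode of $Z$ and exchanging left and right duals. One then checks the defining equation: expanding its left-hand side and using the coend structure of $Z$, the factor $\Lambda^r_{\rdual G\otimes X}$ is absorbed into the coend — recall $\Lambda^r$ is precisely the integral occurring in the cointegral axiom — and the asserted identity ``$=\id_{P\otimes X}$'' becomes exactly the cointegral axiom for $Z$, read through the identification of $Z$-modules with $\zz(\cc)$.

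I expect the main obstacle to be this last verification: matching on the nose the abstract cointegral axiom for the Hopf monad $Z$ (phrased via $i$, the monad multiplication, the antipode and $\alpha$) with the family of identities indexed by all $X\in\cc$ in the definition of a chromatic map. This calls for a patient diagram chase through the coend, together with two reductions. First, one uses that $G$ is a \emph{generator} — and that $\cc$ has enough projectives — to pass from the single cointegral axiom to the $X$-indexed family, by dinaturality. Second, one must see why $P$ has to be projective: this is forced because $P\otimes\rrdual G$ and $P\otimes G\otimes\alpha$ are then projective (equivalently injective), so that the nondegenerate cointegral pairing can be inverted to yield $\chr_P^r$; it is also visible from the fact that the construction is built from the projective cover $\varepsilon\co P_0\to\un$ and the monomorphism $\eta\co\alpha\to P_0$ underlying $\Lambda^r$ and $\Lambda^l$. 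Once the right chromatic map is established, the left one follows by the mirror argument (or by the antipode), which completes the proof.
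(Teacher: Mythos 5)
Your overall strategy --- the central Hopf monad $Z$, the identification of $\Lambda^r$ with a right $\alpha$-integral for $Z$, and Shimizu's $\alpha$-cointegral $\lambda$ normalized against it --- is exactly the route the paper takes, and reducing ``left'' to ``right'' (the paper does this via $\cc^{\otimes \opp}$) is fine. Two of your assertions are stated without proof but are true and provable along the lines you indicate: that the morphism $\Lambda_r\co Z(\alpha)\to\un$ induced by $\Lambda^r$ through the universal property of the coend is a nonzero right $\alpha$-integral (the paper's Lemma~\ref{lem-Ar-integ}, which requires checking the integral axiom against the comonoidal structure $Z_2$ on indecomposable projectives and then extending by Lemma~\ref{lem-extension}), and that the final verification comes down to the cointegral axiom --- though note it also uses the normalization $\Lambda_r\lambda=\id_\un$ and the explicit description of the product $m_\alpha$ of $Z$, not the cointegral axiom alone.

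The genuine gap is in how you produce $\chr_P^r$ from $\lambda$. The cointegral is a morphism $\lambda\co\un\to Z(\alpha)$ into a coend; it carries no preferred representative in $\Hom_\cc(P,\,P\otimes G\otimes\alpha\otimes\rdual{G})$, so there is nothing to ``read off'' from a single diagram. The paper's construction is: form $a_P=(\id_P\otimes\rev_G\otimes\id_{Z(\alpha)})(\id_{P\otimes G}\otimes\partial_{\alpha,\rdual{G}})\co P\otimes G\otimes\alpha\otimes\rdual{G}\to P\otimes Z(\alpha)$, prove that $a_P$ is an \emph{epimorphism} --- this uses that $\rdual{G}$ is a projective generator, so that the universal dinatural component $i_{\alpha,\rdual{G}}$ is epi by \cite{KL} --- and then use the projectivity of $P$ to lift $\id_P\otimes\lambda$ through $a_P$ to a morphism $d_P$, from which $\chr_P^r$ is obtained by capping off with $\rev_{\rdual{G}}$. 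Your explanation of why $P$ must be projective (that $P\otimes\rrdual{G}$ and $P\otimes G\otimes\alpha$ are then projective, ``equivalently injective, so that the nondegenerate cointegral pairing can be inverted'') does not describe a working mechanism: no pairing is inverted anywhere, and injectivity of the source or target plays no role. Projectivity of $P$ is used precisely and only to solve the lifting problem $\id_P\otimes\lambda=a_Pd_P$ against the epimorphism $a_P$. Without this lifting step your candidate morphism does not exist, so you need to add it (together with the epimorphism lemma it rests on) to make the proof complete.
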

We prove Theorem~\ref{thm-chromatic} in Section~\ref{sect-proof-chromatic}.

\begin{exa}\label{exa-H-mod}
Let $H$ be a finite dimensional Hopf algebra over $\kk$. The category $H$\nobreakdash-mod of finite dimensional (left) $H$-modules and $H$-linear homomorphisms is a finite tensor category. Recall that the left dual of an object $M$ of $H$-mod  is the $H$-module $\ldual{M}=M^\ast=\Hom_\kk(M,\kk)$  where each $h \in H$ acts as the transpose of $m \in M \mapsto S(h)\cdot m \in M$, with~$S$  the antipode of $H$. The right dual of $M$ is $\rdual{M}=M^\ast$  where each $h \in H$ acts as the transpose of $m \in M \mapsto S^{-1}(h)\cdot m \in M$. The associated left and right evaluation morphisms are computed for any $m \in M$ and $\varphi \in M^\ast$ by
$$
\lev(\varphi \otimes m)=\varphi(m)=\rev(m \otimes \varphi).
$$
A projective generator of $H$-mod is $H$ equipped with its left
regular action. It follows from \cite[Proposition 6.5.5.]{EGNO} that
the distinguish object $\alpha$ of $H$-mod is~$\kk$ with action
$H \otimes \hspace{.1em} \kk \cong H \to \kk$ given by the inverse
$\alpha_{H}\in H^*$ of the distinguished grouplike element of $H^*$.
(The form $\alpha_H$ is characterized by
$\Lambda S(h)=\alpha_H(h) \Lambda$ for all $h \in H$ and all left
cointegral $\Lambda\in H$.)  Pick a projective cover $\varepsilon\co P_0 \to \kk$  of the unit object and a  monomorphism $\eta\co \alpha \to P_0$.
Since the counit
$\varepsilon_H\co H\to\kk$ of $H$ is an epimorphism, there exists an epimorphism
$p\co H\to P_0$ such that $\varepsilon_H=\varepsilon p$. Let $i\co P_0\to H$
be a section of $p$ in $H$-mod and set $\Lambda=S(i\eta(1_\kk))\in H$.
We prove in Section~\ref{proof-chromatic-H-mod} that $\Lambda$ is
a non zero left cointegral. By \cite[Theorem 10.2.2]{R2012}, there is a unique right integral
$\lambda\in H^*$ such that $\lambda(\Lambda)=1$.  
Then a left chromatic map based at $H$ for $H$ is
$$
 \chr_H^l \co \left \{\begin{array}{ccl} \lldual{H}\otimes H & \to & \alpha\otimes H\otimes H \\ e_x \otimes y & \mapsto &  \lambda\bigl(S(y_{(1)})x\bigr)\alpha_{H}(y_{(2)})\otimes y_{(3)}\otimes y_{(4)}
  \end{array}\right. 
$$
and a right chromatic map based at $H$ for $H$ is 
$$
 \chr_H^r \co \left \{\begin{array}{ccl} H\otimes \rrdual{H} & \to & H\otimes H\otimes \alpha \\ y \otimes e_x & \mapsto & 
  y_{(1)}\otimes y_{(2)} \otimes \alpha_{H}(y_{(2)})  \lambda\bigl(S(x)y_{(1)}\bigr)
  \end{array}\right. 
$$
where $x \in H \mapsto e_x \in H^{\ast\ast}$ is the canonical $\kk$-linear isomomorphism. We verify this in Section~\ref{proof-chromatic-H-mod}.
More generally, for any finite dimensional projective $H$-module~$P$, 
$$
 \chr_P^l= \sum_i (\id_{\alpha \otimes H} \otimes g_i) \chr_H^l (\id_{\lldual{H}} \otimes f_i) \quad \text{and} \quad \chr_P^r=\sum_i (g_i \otimes \id_{H \otimes \alpha}) \chr_H^l (\id_{f_i \otimes \rrdual{H}}) 
$$
are respectively a left chromatic map and right chromatic map based on $P$ for $H$, where $\{f_i\co P\to H,g_i \co H\to P\}_i$ is any finite family of $H$-linear homomorphisms such that $\id_P=\sum_ig_if_i$.
\end{exa}

\subsection{The case of spherical finite tensor categories}\label{sect-chromatic-pivotal}
Recall the notion of a left or right modified trace from \cite{GPV13}.
Following \cite{SS2021}, a finite tensor category is \emph{spherical} if it is pivotal, unimodular (meaning that $\un$ is the distinguished invertible object), and the unique non degenerate right modified trace on the full subcategory $\Proj$ of projective objects is also a left modified trace.

Let~$\cc$ be a spherical finite tensor category. Pick a  projective cover $\varepsilon \co P_0 \to \un$ of the monoidal unit and a mono\-mor\-phism $\eta \co \un \to P_0$.
Consider the unique non degenerate two-sided modified trace $$\mt=\{\mt_P \co \End_\cc(P) \to \kk\}_{P \in \Proj}$$ such that $\mt_{P_0}(\eta \varepsilon)=1_\kk$ (see~\cite[Corollary 5.6]{GKP22}). Recall that the non degeneracy of~$\mt$ implies that the pairing $u \otimes_\kk v \in \Hom_\cc(P,\un)\otimes_\kk \Hom_\cc(\un,P)\to \mt_P(vu)\in\kk$ is non degenerate.
Define
 $$\Lambda^\mt=\{\Lambda_P^\mt \co P \to P\}_{P \in \Proj}$$
by setting $\Lambda_P^\mt =\sum_i x_i \circ x^i$, where $(x^i)_i$ and $(x_i)_i$ are basis of $\Hom_\cc(P,\un)$ and $\Hom_\cc(\un,P)$ which are dual with respect to above pairing. Note that the cyclicity of $\mt$ (meaning that $\mt_P(gf)=\mt_Q(fg)$ for all morphisms $f \co P \to Q$ and $g \co Q \to P$ in $\Proj$) implies that the family $\Lambda^\mt$ is natural in $P \in \Proj$.

Let $P$ be a projective object and  $G$ be a projective generator of $\cc$. A \emph{chromatic map} based on $P$ for $G$ in the sense of~\cite{CGPT20,CGPV23} is a morphism $$\chr_P\in\End_\cc(G\otimes P)$$ such that for all $X\in\cc$,
$$
(\id_X \otimes \lev_G \otimes \id_P)(\Lambda^{\mt}_{X \otimes G^*} \otimes \chr_p) (\id_X \otimes \rcoev_G \otimes \id_P)=\id_{X \otimes P}.
$$
This condition depicts as:
$$
\psfrag{P}[Bl][Bl]{\scalebox{.8}{$P$}}
\psfrag{G}[Bl][Bl]{\scalebox{.8}{$G$}}
\psfrag{X}[Br][Br]{\scalebox{.8}{$X$}}
\psfrag{Y}[Bl][Bl]{\scalebox{.8}{$X$}}
\psfrag{c}[Bc][Bc]{\scalebox{.9}{$\chr_P$}}
\psfrag{A}[Bc][Bc]{\scalebox{.9}{$\Lambda^\mt_{X \otimes G^*}$}}
\rsdraw{.45}{.9}{cp-def}\; =\;\; \rsdraw{.45}{.9}{cp-def-2}\;.
$$
Here we use the following graphical conventions for pivotal categories: strands are oriented, a strand oriented upwards (respectively, downwards) and colored by an object $V \in \cc$ corresponds to $\id_V$ (respectively, $\id_{V^*}$), and the oriented caps and cups correspond to the evaluation and coevaluation morphisms.

Consider the natural transformations $\Lambda^r$ and $\Lambda^l$ associated with $\varepsilon$ and $\eta$ as in~Lemma~\ref{lem-Lambda}.
\begin{lem}\label{lem-Lambdas-comparison}
For any projective object $P$ of $\cc$, we have: $\Lambda^\mt_P=\Lambda^r_P=\Lambda^l_P$.
\end{lem}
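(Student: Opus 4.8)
The plan is to show that all three natural transformations agree by using the uniqueness clause of Lemma~\ref{lem-Lambda}: since $\cc$ is spherical, it is unimodular, so $\alpha = \un$, and both $\Lambda^r$ and $\Lambda^l$ are then endomorphisms $\{P \to P\}_{P \in \cc}$ of the identity functor restricted to $\Proj$. By Lemma~\ref{lem-Lambda}, $\Lambda^r$ (resp.\ $\Lambda^l$) is the \emph{unique} natural transformation that vanishes on every indecomposable projective $P \not\cong P_0$ and equals $\eta\varepsilon$ on $P_0$. So it suffices to check that $\Lambda^\mt$ has these same two properties; the equality $\Lambda^\mt_P = \Lambda^r_P = \Lambda^l_P$ for all projective $P$ then follows for free (and in particular $\Lambda^r = \Lambda^l$, recovering naturality of $\Lambda^\mt$ that was already noted).

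First I would check the normalization on $P_0$. By definition $\Lambda^\mt_{P_0} = \sum_i x_i \circ x^i$ where $(x^i)_i$, $(x_i)_i$ are dual bases of $\Hom_\cc(P_0,\un)$ and $\Hom_\cc(\un,P_0)$ with respect to the pairing $(u,v) \mapsto \mt_{P_0}(vu)$. Since $\un$ is simple and $P_0$ is the projective cover of $\un$, one has $\dim_\kk \Hom_\cc(P_0,\un) = \dim_\kk \Hom_\cc(\un, P_0) = 1$: the first because $[P_0:\un]$-type reasoning (the head of $P_0$ is $\un$, so $\Hom(P_0,\un) = \kk\varepsilon$), the second because $\Hom(\un,P_0) = \kk\eta$ as $\eta$ embeds the socle, which is $\un$ by unimodularity. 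So the dual bases are $x^1 = c\,\varepsilon$ and $x_1 = c^{-1}\eta$ for the scalar $c$ determined by $\mt_{P_0}(x_1 x^1) = 1$; but $\mt_{P_0}(x_1 x^1) = \mt_{P_0}(\eta\varepsilon)$ by cyclicity/bilinearity, and this equals $1_\kk$ by the choice of normalization of $\mt$. Hence $\Lambda^\mt_{P_0} = x_1 \circ x^1 = \eta\varepsilon$, matching $\Lambda^r_{P_0} = \Lambda^l_{P_0} = \eta\varepsilon$.

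Next I would check vanishing on an indecomposable projective $P \not\cong P_0$. Here the socle of $P$ is some simple object; whatever it is, what matters is $\Hom_\cc(\un, P)$ and $\Hom_\cc(P,\un)$. A nonzero map $\un \to P$ factors through the socle, and a nonzero map $P \to \un$ is a surjection onto the (simple) head of $P$; both of these would force $\un$ to be a composition factor sitting at the socle, resp.\ the head, of $P$. If one of these Hom-spaces is already $0$ we are done immediately ($\Lambda^\mt_P$ is an empty sum). The remaining case is that both are nonzero, i.e.\ $\un$ sits in both socle and head of $P$; I expect this to be the genuine obstacle. The resolution should be: for any $v \co \un \to P$ and $u \co P \to \un$, the composite $uv \in \End_\cc(\un) = \kk\,\id_\un$ is a scalar $\mu(u,v)$, and one checks $\mt_{P_0}$ is designed so that the pairing $\mt_P(vu)$ coincides (up to the fixed normalization) with $\mu(u,v)$; then $\Lambda^\mt_P = \sum_i x_i x^i$ has the property that $\varepsilon_{\un}$-type arguments — more precisely, pairing against $\eta$ and $\varepsilon$ through any map $P_0 \to P \to P_0$ — reduce $\Lambda^\mt_P$ to something that must be both in the image of the socle inclusion and vanish on the head, hence $0$. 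Alternatively, and more cleanly, one can invoke that $\Lambda^\mt$ is natural (already established in the text) together with the fact that $\mathrm{Hom}(P,P_0)$ and $\mathrm{Hom}(P_0,P)$ contain no isomorphism-inducing maps when $P \not\cong P_0$ is indecomposable, so the composite $P_0 \xrightarrow{\ } P \xrightarrow{\Lambda^\mt_P} P \xrightarrow{\ } P_0$ lies in the radical of $\End_\cc(P_0)$; combined with the normalization on $P_0$ and non-degeneracy of the modified-trace pairing, this pins $\Lambda^\mt_P = 0$. Either way, once both properties are verified, the uniqueness in Lemma~\ref{lem-Lambda} finishes the proof, extending from indecomposables to all projective $P$ by additivity of all three transformations under direct sums.
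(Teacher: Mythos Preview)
Your overall strategy matches the paper's: check $\Lambda^\mt$ on $P_0$ and on each indecomposable projective $P\not\cong P_0$, then extend by direct sums (equivalently, invoke the uniqueness in Lemma~\ref{lem-Lambda}). The treatment of $P_0$ is fine and is exactly what the paper does.

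The gap is in your handling of $P\not\cong P_0$. You correctly observe that a nonzero map $P\to\un$ would force $\un$ to be the head of $P$, but then fail to draw the immediate conclusion: the head of an indecomposable projective determines it up to isomorphism (it is the projective cover of its head), so $\un$ being the head of $P$ would give $P\cong P_0$, contradiction. Hence $\Hom_\cc(P,\un)=0$ automatically, and $\Lambda^\mt_P$ is an empty sum. Your ``remaining case'' where both $\Hom_\cc(\un,P)$ and $\Hom_\cc(P,\un)$ are nonzero simply never occurs, and the paper's proof just records $\Hom_\cc(P,\un)=0=\Hom_\cc(\un,P)$ and moves on. (The second vanishing also holds: in a unimodular finite tensor category $P_0$ has socle $\un$, and since an indecomposable projective has a \emph{unique} simple subobject, the Nakayama correspondence makes $P_0$ the only indecomposable projective with socle $\un$; but either vanishing alone already suffices.)

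Your proposed workarounds for that phantom case --- pairing against $\eta,\varepsilon$ via maps $P_0\to P\to P_0$, or an argument through the radical of $\End_\cc(P_0)$ --- are vague as written and would need substantial work to become proofs; fortunately they are unnecessary once you notice the one-line reason above.
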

We prove Lemma~\ref{lem-Lambdas-comparison} in Section~\ref{proof-lem-Lambdas-comparison}. The next corollary is a direct consequence of Theorem~\ref{thm-chromatic} and Lemma~\ref{lem-Lambdas-comparison}.

\begin{cor}\label{cor-chromatic}
Any spherical tensor category has chromatic maps.
\end{cor}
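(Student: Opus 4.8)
The plan is to deduce the corollary directly from Theorem~\ref{thm-chromatic} and Lemma~\ref{lem-Lambdas-comparison}; the only work is to transport a left (equivalently, right) chromatic map through the pivotal structure and to match the two sets of conventions.

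First I would fix notation. Let $\cc$ be a spherical finite tensor category. Since $\cc$ is unimodular, its distinguished invertible object is $\alpha=\un$, so we may pick a projective cover $\varepsilon\co P_0\to\un$ together with a monomorphism $\eta\co\un\to P_0$ and form, as in Section~\ref{sect-chromatic-pivotal} and Lemma~\ref{lem-Lambda}, the families $\Lambda^\mt$, $\Lambda^l$ and $\Lambda^r$. By Lemma~\ref{lem-Lambdas-comparison} these three families coincide on every projective object; we also use that in a finite tensor category projectives are stable under duality and form a two-sided tensor ideal, so that $X\otimes \rdual{G}$ is projective for any object $X$ and any projective generator $G$, and hence $\Lambda^\mt_{X\otimes \rdual{G}}$ is defined.

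Next I would construct the candidate. Fix a projective object $P$ and a projective generator $G$. Theorem~\ref{thm-chromatic} gives a left chromatic map $\chr_P^l\in\Hom_\cc(\lldual G\otimes P,\,\alpha\otimes G\otimes P)$; reading it, via $\alpha=\un$ and the pivotal identification $\lldual G=G^{**}$, as a morphism $G^{**}\otimes P\to G\otimes P$, set
$$
\chr_P:=\chr_P^l\circ(\phi_G\otimes\id_P)\in\End_\cc(G\otimes P),
$$
where $\phi_G\co G\xrightarrow{\ \sim\ }G^{**}$ is the pivotal structure. To see that $\chr_P$ satisfies the defining equation of a chromatic map in the sense of~\cite{CGPT20,CGPV23}, I would substitute $\chr_P=\chr_P^l(\phi_G\otimes\id_P)$ into its left-hand side, push the morphism $\phi_G$ down past $\id_X\otimes\rcoev_G\otimes\id_P$ using the routine pivotality identity $(\id_{\ldual G}\otimes\phi_G)\circ\rcoev_G=\lcoev_{\ldual G}$, apply the interchange law, and finally replace $\Lambda^\mt_{X\otimes \rdual{G}}$ by $\Lambda^l_{X\otimes\ldual G}$ using Lemma~\ref{lem-Lambdas-comparison} (legitimate since $\alpha=\un$). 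The left-hand side then becomes
$$
(\id_X\otimes\lev_G\otimes\id_P)(\Lambda^l_{X\otimes\ldual G}\otimes\id_{G\otimes P})(\id_{X\otimes\ldual G}\otimes\chr_P^l)(\id_X\otimes\lcoev_{\ldual G}\otimes\id_P),
$$
which equals $\id_{X\otimes P}$ by the defining property of $\chr_P^l$. (Symmetrically, $\chr_P$ could be built from the right chromatic map of Theorem~\ref{thm-chromatic}.) This last step is a short graphical manipulation, most transparently carried out with the diagrams of Sections~\ref{subsect-chromatic} and~\ref{sect-chromatic-pivotal}.

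I do not expect a genuine obstacle: the statement is a formal consequence of Theorem~\ref{thm-chromatic} once Lemma~\ref{lem-Lambdas-comparison} is in hand. The only slightly delicate point is bookkeeping — the condition of~\cite{CGPT20,CGPV23} is written with the right coevaluation $\rcoev_G$ and the object $G\otimes P$, whereas the left chromatic map of Theorem~\ref{thm-chromatic} uses $\lcoev_{\ldual G}$ and $\lldual G\otimes P$, and one must check that the pivotal structure bridges them with compatible normalizations, in particular that the normalization $\mt_{P_0}(\eta\varepsilon)=1_\kk$ of the modified trace matches $\Lambda^l_{P_0}=\varepsilon\otimes\eta$ (with $\alpha=\un$). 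But this compatibility is exactly what Lemma~\ref{lem-Lambdas-comparison} encodes, so the corollary follows.
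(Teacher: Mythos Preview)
Your proposal is correct and follows essentially the same route as the paper: both use Lemma~\ref{lem-Lambdas-comparison} to replace $\Lambda^\mt$ by $\Lambda^l$ and the pivotal identity relating $\rcoev_G$ to $\lcoev_{G^*}$ to show that $\chr_P\mapsto \chr_P(\phi_G^{-1}\otimes\id_P)$ (equivalently your $\chr_P^l\mapsto \chr_P^l(\phi_G\otimes\id_P)$) sets up a bijection between chromatic maps and left chromatic maps, and then invoke Theorem~\ref{thm-chromatic}. The paper states the pivotal identity as $\rcoev_G=(\id_{G^*}\otimes\phi_G^{-1})\lcoev_{G^*}$, which is your identity composed with $\id_{G^*}\otimes\phi_G^{-1}$.
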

\begin{proof}
Lemma~\ref{lem-Lambdas-comparison} and the fact that $\rcoev_G=(\id_{G^*} \otimes \phi^{-1}_{G})\lcoev_{G^*}$,
where $\phi$ is the pivotal structure of $\cc$ (see Section~\ref{sect-conventions}) imply that a morphism $\chr_P\co G \otimes P \to G \otimes P$ is a chromatic map  if and only if the morphism $\chr_P^l= \chr_P( \phi_G^{-1} \otimes \id_P) \co \bdual{G} \otimes P \to G \otimes P$
is a left chromatic map.
The existence of a chromatic map based on any projective object $P$ for any projective generator $G$ follows then from Theorem~\ref{thm-chromatic}.
\end{proof}

\begin{rk}
Let $P$ be a projective object and  $G$ be a projective generator of~$\cc$. Then $P^*$ is a projective object and  $G^*$ is a projective generator of $\cc$. By Corollary~\ref{cor-chromatic}, there is a chromatic map $\chr_{P^*}\in \End_\cc(G^* \otimes P^*)$ based on $P^*$ for $G^*$. Then the morphism
$$
\psfrag{P}[Br][Br]{\scalebox{.8}{$P$}}
\psfrag{G}[Bl][Bl]{\scalebox{.8}{$G$}}
\psfrag{c}[Bc][Bc]{\scalebox{.9}{$\chr_{P^*}$}}
\tilde{\chr}_{P}=\rsdraw{.45}{.9}{cp-star-def}\in\End_\cc(P\otimes G)
$$
satisfies the following relation: for all $X\in\cc$,
$$
\psfrag{P}[Br][Br]{\scalebox{.8}{$P$}}
\psfrag{G}[Br][Br]{\scalebox{.8}{$G$}}
\psfrag{X}[Bl][Bl]{\scalebox{.8}{$X$}}
\psfrag{R}[Bl][Bl]{\scalebox{.8}{$P$}}
\psfrag{c}[Bc][Bc]{\scalebox{.9}{$\tilde{\chr}_{P}$}}
\psfrag{A}[Bc][Bc]{\scalebox{.9}{$\Lambda^\mt_{G^* \otimes X}$}}
\rsdraw{.45}{.9}{cp-star}\; =\;\; \rsdraw{.45}{.9}{cp-star-2}\;.
$$
\end{rk}

\begin{exa}\label{ex-spherical-fusion}
Let $\cc$ be a spherical fusion  category.  Then every object of $\cc$  is projective and a projective generator is $G=\bigoplus_{i\in I} i$, where $I$ is a representative set of simples object of $\cc$. For any projective object $P$, a chromatic map based on $P$ is 
$$
\chr_P=\bigoplus_{i\in I} \mathrm{qdim}(i) \, \id_i\otimes \id_P.
$$
Formally, $\chr_P=\id_\Omega \otimes \id_P$, where $\Omega=\bigoplus_{i\in I} \mathrm{qdim}(i) \,i$ is the ``Kirby color'' of $\cc$. 
\end{exa}

\begin{exa}\label{ex-Hopf-chromatic}
Let $H$ be a finite dimensional Hopf algebra over $\kk$. Then the category $H$-mod (see Example~\ref{exa-H-mod}) is spherical if and only if $H$ is unimodular and unibalanced in the sense of \cite{BBG} (meaning that $H$ is pivotal with pivot a square root of the distinguish grouplike element of $H$). Assume this is the case. With the notation of Example~\ref{exa-H-mod}, this means
that $\alpha_{H}\in H^*$ is the counit of $H$ and the pivot $g$ of $H$ satisfies $S^2(h)=ghg^{-1}$ and $\lambda(h_{(2)})h_{(1)}=\lambda(h)g^2$ for all $h\in H$.  Then the canonical $\kk$-linear isomomorphism $x \in H \mapsto e_x \in H^{\ast\ast}$ induces a $H$-linear isomorphism $H\cong \lldual{H}$ equal to the pivotal structure of $H$-mod evaluated at $H$. Consequently, it follows from Example~\ref{exa-H-mod} (see also \cite[Section 6]{CGPT20}) that a chromatic map based at $H$ for $H$~is
$$
 \chr_H \co \left \{\begin{array}{ccl} H\otimes H & \to & H\otimes H \\ x \otimes y & \mapsto & \lambda(S(y_{(1)})gx)\, y_{(2)}\otimes y_{(3)}.
  \end{array}\right. 
$$
More generally, for any finite dimensional projective $H$-module~$P$, 
$$
 \chr_P= \sum_i (\id_H \otimes g_i) \chr_H (\id_H \otimes f_i) \co H \otimes P \to H \otimes P
$$
is a chromatic map based on $P$ for $H$, where $\{f_i\co P\to H,g_i \co H\to P\}_i$ is any finite family of $H$-linear homomorphisms such that $\id_P=\sum_ig_if_i$.
\end{exa}

\section{Proofs}\label{sect-proofs}

We first prove Lemma~\ref{lem-Lambda} in Section~\ref{proof-lem-Lambda}. Next we recall the notions of a Hopf monad and their based (co)integrals (Section~\ref{sect-Hopf-monads}) and the construction of the central Hopf monad (Section~\ref{sect-central-Hopf-monad}). Then we use these notions to prove Theorem~\ref{thm-chromatic} in Section~\ref{sect-proof-chromatic}.
Finally, in Section~\ref{proof-lem-Lambdas-comparison}, we prove Lemma~\ref{lem-Lambdas-comparison}.

\subsection{Proof of Lemma~\ref*{lem-Lambda}}\label{proof-lem-Lambda}
For any indecomposable projective object $P$ non isomorphic to $P_0$ and all morphisms $f \in \End_\cc(P_0)$, $g \in \Hom_\cc(P,P_0)$, $h \in \Hom_\cc(P_0,P)$, it follows from \cite[Lemma 4.3]{GKP22} that
$$
\eta \otimes \varepsilon f=f\eta \otimes \varepsilon, \quad \varepsilon g=0, \quad h\eta=0.
$$
This and the fact that any projective object is a (finite) direct sum of indecomposable projective objects imply that the prescriptions of Lemma~\ref{lem-Lambda} uniquely define natural transformations
$\{\Lambda_P^r \co \alpha \otimes P \to P\}_{P \in \Proj}$
and $\{\Lambda_P^r \co P \otimes \alpha \to P\}_{P \in \Proj}$, where $\Proj$ is the full subcategory of $\cc$ of projective objects. These natural transformations further uniquely  extend to $\cc$ by applying the next Lemma~\ref{lem-extension} with the functor $F=\alpha \otimes -$ (which is exact since it is an equivalence because $\alpha$ is invertible)
and the identity functor $G=1_\cc$.

\begin{lem}\label{lem-extension}
Let $F,G \co \aaa \to \bb$ be additive functors between abelian categories. Assume that $\aaa$ has enough projectives and that $F$ is right exact. Denote by  $\Proj$ the full subcategory of $\aaa$ of projective objects.
Then any natural transformation
$\{\alpha_P \co F(P) \to G(P)\}_{P \in \Proj}$ uniquely extends to $\aaa$, that is, to a natural transformation $\{\alpha_X \co F(X) \to G(X)\}_{X \in \aaa}$.
\end{lem}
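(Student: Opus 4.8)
The plan is to build the extension from projective presentations. Since $\aaa$ has enough projectives, every object $X \in \aaa$ admits an exact sequence $P_1 \xrightarrow{\,d\,} P_0 \xrightarrow{\,p\,} X \to 0$ with $P_0,P_1 \in \Proj$. As $F$ is right exact, applying $F$ yields an exact sequence $F(P_1) \xrightarrow{F(d)} F(P_0) \xrightarrow{F(p)} F(X) \to 0$, so $F(p)$ exhibits $F(X)$ as the cokernel of $F(d)$; in particular $F(p)$ is an epimorphism. First I would note that $G(p)\,\alpha_{P_0}\co F(P_0)\to G(X)$ annihilates $F(d)$: by naturality of $\alpha$ on $\Proj$ applied to $d\co P_1\to P_0$, one has $G(p)\,\alpha_{P_0}\,F(d)=G(p)\,G(d)\,\alpha_{P_1}=G(pd)\,\alpha_{P_1}=0$. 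Hence $G(p)\,\alpha_{P_0}$ factors uniquely through the cokernel $F(p)$, giving a unique morphism $\alpha_X\co F(X)\to G(X)$ with $\alpha_X\,F(p)=G(p)\,\alpha_{P_0}$. This will be the value of the extension at $X$.

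Next I would establish independence of the chosen presentation together with naturality in one stroke. Given $f\co X\to Y$ and projective presentations $P_1\to P_0\xrightarrow{p}X\to 0$ and $Q_1\to Q_0\xrightarrow{q}Y\to 0$, the projectivity of $P_0$ and the surjectivity of $q$ yield a lift $f_0\co P_0\to Q_0$ with $qf_0=fp$. Then
\[
\alpha_Y\,F(f)\,F(p)=\alpha_Y\,F(q)\,F(f_0)=G(q)\,\alpha_{Q_0}\,F(f_0)=G(q)\,G(f_0)\,\alpha_{P_0}=G(f)\,G(p)\,\alpha_{P_0}=G(f)\,\alpha_X\,F(p),
\]
where the first and fourth equalities use $qf_0=fp$, the second and fifth are the defining relations of $\alpha_Y$ and $\alpha_X$, and the third is the naturality of $\alpha$ on $\Proj$ at $f_0$. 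Since $F(p)$ is an epimorphism, $\alpha_Y\,F(f)=G(f)\,\alpha_X$. Taking $Y=X$ and $f=\id_X$ with two different presentations shows that $\alpha_X$ is well defined; taking for a projective $X$ the presentation $0\to X\xrightarrow{\id}X\to 0$ shows that $\alpha_X$ coincides with the originally given morphism. Hence $\{\alpha_X\}_{X\in\aaa}$ is a natural transformation extending $\{\alpha_P\}_{P\in\Proj}$.

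For uniqueness, if $\{\beta_X\}_{X\in\aaa}$ is any natural transformation $F\Rightarrow G$ restricting to $\alpha$ on $\Proj$, then for $X$ with presentation as above the naturality of $\beta$ at $p$ gives $\beta_X\,F(p)=G(p)\,\beta_{P_0}=G(p)\,\alpha_{P_0}=\alpha_X\,F(p)$, and since $F(p)$ is an epimorphism, $\beta_X=\alpha_X$.

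The only place the hypothesis on $F$ enters — and the sole point that is not a purely formal diagram chase — is the right exactness of $F$, which is exactly what makes $F(p)\co F(P_0)\to F(X)$ a cokernel map, hence an epimorphism through which $G(p)\,\alpha_{P_0}$ factors uniquely; without it there would be no canonical way to define $\alpha_X$. I expect no real obstacle beyond invoking the lifting property of projectives and the defining relation of $\alpha_X$ in the correct order, the bulk of the (modest) work being the naturality computation displayed above.
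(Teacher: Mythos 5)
Your proof is correct and follows essentially the same route as the paper's: define $\alpha_X$ by factoring $G(p)\,\alpha_{P_0}$ through the cokernel $F(p)$ (using right exactness of $F$ and naturality of $\alpha$ on $\Proj$), then verify well-definedness, naturality, restriction to $\Proj$, and uniqueness by cancelling the epimorphism $F(p)$. The only cosmetic difference is that you merge the independence-of-presentation check and the naturality square into a single computation, whereas the paper treats them separately.
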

\begin{proof}
Consider a natural transformation $\alpha=\{\alpha_P \co F(P) \to G(P)\}_{P \in \Proj}$. Assume first that $\bar{\alpha}$ and $\tilde{\alpha}$ are both extensions of $\alpha$ to $\aaa$. Let $X \in \aaa$. Pick an
epimorphism $p\co P \to X$ with $P$ projective.
Using the  naturality of $\bar{\alpha}$ and $\tilde{\alpha}$ together with the fact that both $\bar{\alpha}$ and $\tilde{\alpha}$ extend $\alpha$, we have:
$$
\bar{\alpha}_X F(p)=G(p) \bar{\alpha}_P=G(p) \alpha_P=G(p) \tilde{\alpha}_P=\tilde{\alpha}_X F(p).
$$
Thus $\bar{\alpha}_X=\tilde{\alpha}_X$ since $F(p)$ is an epimorphism (because $p$ is and $F$ is right exact). This proves the uniqueness of an extension of $\alpha$ to $\aaa$.

We now prove the existence of an extension of $\alpha$ to $\aaa$. Let $X \in \aaa$. Pick an epimorphism $p\co P \to X$ with $P$ projective. Then there is a unique morphism $\bar{\alpha}_X \co F(X) \to G(X)$ in $\aaa$ such that
\begin{equation}\label{eq-p1}
\bar{\alpha}_X F(p)=G(p) \alpha_P.
\end{equation}
Indeed, since $\aaa$ is abelian, the epimorphism $p$ is the cokernel of its kernel $k \co K \to P$. Pick an epimorphism $r\co Q \to K$ with $Q$ projective. Then $p$ is the cokernel of $q=kr \co Q \to P$, and so $F(p)$ is the cokernel of $F(q)$ (because $F$ is right exact). Consequently, since
$$
G(p) \alpha_P F(q)=G(p)G(q) \alpha_Q= G(pq) \alpha_Q=G(0) \alpha_Q=0,
$$
there is a unique morphism $\bar{\alpha}_X \co F(X) \to G(X)$ in $\aaa$ satisfying \eqref{eq-p1}. Note that the morphism $\bar{\alpha}_X$ does not depend on the choice of $p$. Indeed, let $r \co R \to X$ be another epimorphism with $R$ projective and denote by $\tilde{\alpha}_X \co F(X) \to G(X)$ the unique morphism such that $G(r) \alpha_R=\tilde{\alpha}_X F(r)$. 
Since $P$ is projective and $r$ is an epimorphism, there is a morphism $s \co P \to R$ such that $p=rs$. Then
$$
\bar{\alpha}_X F(p)=G(p) \alpha_P=G(r)G(s) \alpha_P=G(r)\alpha_R F(s)=\tilde{\alpha}_X F(r)F(s)=\tilde{\alpha}_XF(p),
$$
and so $\tilde{\alpha}_X=\bar{\alpha}_X$  (since $F(p)$ is an epimorphism). Note also that $\bar{\alpha}_P=\alpha_P$ for all $P \in \Proj$.  Indeed, since $\id_P \co P \to P$ is an epimorphism with $P$ projective and using the defining relation \eqref{eq-p1}, we have:
$$
\bar{\alpha}_P= \bar{\alpha}_P F(\id_P)=G(\id_P) \alpha_P=\alpha_P.
$$
It remains to prove that the family $\bar{\alpha}=\{\bar{\alpha}_X \co F(X) \to G(X)\}_{X \in \aaa}$ is natural  in~$X$.
Let $f \co X \to Y$ be a morphism in $\aaa$. Pick epimorphisms $p\co P \to X$ and $q\co Q \to Y$ with $P,Q$ projective. Since $P$ is projective and $q$ is an epimorphism, there is a morphism $g \co P \to Q$ such that $fp=qg$.  Consider the following diagram:
\[\begin{tikzcd}[column sep={1.3em,between origins}, row sep={1.2em,between origins}]
	&&&&& {F(X)} &&&&&&& {F(Y)} \\ \\ \\ &&&&& {\small\text{(i)}} \\
	&&&&&&&&&&&& {\small\text{(ii)}} \\
	&&&&&& {F(Q)} \\ \\ {F(P)} &&&&&& {\small\text{(v)}} &&&&&& {G(Q)} &&&&& {G(Y)} \\ 	\\
	&&&&&& {G(P)} \\ &&&&&&&&&&&& {\small\text{(iii)}} \\
	&&&&& {\small\text{(iv)}} \\ \\ \\ 	&&&&& {F(X)} &&&&&&& {G(X)}
	\arrow["{F(p)}", from=8-1, to=1-6]
	\arrow["{F(f)}", from=1-6, to=1-13]
	\arrow["{\bar{\alpha}_Y}", from=1-13, to=8-18]
	\arrow["{F(p)}"', from=8-1, to=15-6]
	\arrow["{\bar{\alpha}_X}"', from=15-6, to=15-13]
	\arrow["{G(f)}"', from=15-13, to=8-18]
	\arrow["{F(q)}"', from=6-7, to=1-13]
	\arrow["{\alpha_P}"'{pos=0.7}, from=8-1, to=10-7]
	\arrow["{G(p)}", from=10-7, to=15-13]
	\arrow["{F(g)}"{pos=0.7}, from=8-1, to=6-7]
	\arrow["{G(q)}", from=8-13, to=8-18]
	\arrow["{\alpha_Q}"{pos=0.4}, from=6-7, to=8-13]
	\arrow["{G(g)}"'{pos=0.4}, from=10-7, to=8-13]
\end{tikzcd}\]
The inner squares (i) and (iii) commute by the functoriality of $F$ and $G$ applied to the equality $fp=qg$. The inner squares (ii) and (iv) commute by the defining relation \eqref{eq-p1}. The inner square (v) commutes by the naturality of $\alpha$. Consequently, the outer diagram commutes: $\bar{\alpha}_YF(f)F(p)=G(f)\bar{\alpha}_XF(p)$. Since $F(p)$ is an epimorphism (because $p$ is and $F$ is right exact), we obtain $\bar{\alpha}_YF(f)=G(f)\bar{\alpha}_X$.
\end{proof}

\subsection{Hopf monads and their based (co)integrals}\label{sect-Hopf-monads}
A \emph{monad} on a category $\cc$  is a monoid in the category of endofunctors of $\cc$, that
is, a triple $(T,m,u)$ consisting of a functor $T\co \cc \to \cc$ and two natural transformations
$$
m=\{m_X\co T^2(X) \to T(X)\}_{X \in \cc}\quad \text{and} \quad  u=\{u_X\co X \to T(X)\}_{X \in \cc}
$$
called the \emph{product} and the \emph{unit} of $T$, such that for any $X\in\cc$,
$$
m_XT(m_X)=m_Xm_{T(X)} \quad {\text {and}} \quad m_Xu_{T(X)}=\id_{T(X)}=m_X T(u_X).
$$

A \emph{bimonad} on monoidal category   $\cc$ is a monoid in the
category of  comonoidal endofunctors of $\cc$. In other words, a bimonad on $\cc$ is a
monad $(T,m,u)$ on $\cc$ such that the functor $T$ and the natural transformations $m$ and $u$ are comonoidal. The comonoidality of $T$ means that $T$ comes equipped with a natural transformation $ T_2=\{T_2(X,Y) \co  T(X \otimes Y)\to T(X)
\otimes T(Y)\}_{X,Y \in \cc} $ and a morphism $T_0\co T(\un) \to \un$ such that for all $X,Y,Z \in \cc$,
\begin{gather*}
\bigl(\id_{T(X)} \otimes T_2(Y,Z)\bigr) T_2(X,Y \otimes Z)= \bigl(T_2(X,Y) \otimes \id_{T(Z)}\bigr) T_2(X \otimes Y, Z) ,\\
(\id_{T(X)} \otimes T_0) T_2(X,\un)=\id_{T(X)}=(T_0 \otimes \id_{T(X)}) T_2(\un,X).
\end{gather*}
The comonoidality of $m$ and $u$ means that for all $X,Y \in~\cc$,
\begin{gather*}
T_2(X,Y)m_{X \otimes Y}=(m_X \otimes m_Y) T_2(T(X),T(Y))T(T_2(X,Y)),\\
 T_2(X,Y)u_{X \otimes Y}=u_X \otimes u_Y.
\end{gather*}

Let $T=(T,m,u)$ be a bimonad on a monoidal category $\cc$ and $A$ be an object of~$\cc$. A \emph{left $A$-integral} for~$T$ is a morphism $\Lambda_l \co T(A) \to \un$ in $\cc$ such that
$$
(\id_{T(\un)} \otimes \Lambda_l)T_2(\un,A)= u_\un  \Lambda_l.
$$
Similarly, a \emph{right $A$-integral} for $T$ is a morphism $\Lambda_r \co T(A) \to \un$ in $\cc$ such that
$$
(\Lambda_r \otimes \id_{T(\un)})T_2(A,\un)= u_\un  \Lambda_r.
$$
An \emph{$A$-cointegral} for $T$ is a morphism $\lambda \co \un \to T(A)$ in $\cc$ which is $T$-linear:
$$
m_A T(\lambda)=\lambda T_0.
$$

A \emph{Hopf monad} on  monoidal category $\cc$ is a bimonad on $\cc$
whose left and right fusion operators  are isomorphisms (see~\cite{BLV}).
When $\cc$ is a rigid category,    a bimonad $T$ on
$\cc$ is a Hopf monad if and only if it has a left antipode and a right antipode (see \cite{BV2}).  (Here, we will not need the actual definition of a Hopf monad and so just refer to~\cite{BLV,BV2}.)

\subsection{The central Hopf monad}\label{sect-central-Hopf-monad}
Let $\cc$ be a rigid monoidal category. Assume that for any $X \in \cc$, the coend
\begin{equation}\label{eq-coend-Z}
Z(X)=\int^{Y \in \cc} \ldual{Y} \otimes X \otimes Y
\end{equation}
exists. Denote by $i_{X,Y}\co \ldual{Y} \otimes X \otimes Y \to  Z(X)$ the associated universal dinatural transformation and set
\begin{equation*}
\partial_{X,Y}=(\id_{Y} \otimes i_{X,Y})(\lcoev_{Y} \otimes \id_{X \otimes Y}) \co X \otimes Y \to Y \otimes Z(X).
\end{equation*}
We will depict the morphism $\partial_{X,Y}$ as
$$
\psfrag{T}[Bl][Bl]{\scalebox{.8}{$Z(X)$}}
\psfrag{Z}[Br][Br]{\scalebox{.8}{$Y$}}
\psfrag{Y}[Bl][Bl]{\scalebox{.8}{$Y$}}
\psfrag{X}[Br][Br]{\scalebox{.8}{$X$}}
\partial_{X,Y}=\;\rsdraw{.45}{.9}{partial-Z}
$$
and call $\partial=\{\partial_{X,Y}\}_{X,Y \in \cc}$ the \emph{centralizer} of $\cc$.
The universality of $\{i_{X,Y}\}_{Y \in \cc}$ translates to a universal factorization property for  $\partial$ as follows: for any $X,M\in\cc$ and any natural transformation $\{\xi_Y\co X \otimes Y \to Y \otimes M\}_{Y \in \cc}$, there exists a unique morphism $r\co Z(X) \to M$ in $\cc$ such that $\xi_Y=(\id_Y \otimes r)\partial_{X,Y}$ for all $Y \in \cc$:
$$
\psfrag{A}[Br][Br]{\scalebox{.8}{$X$}}
\psfrag{B}[Bl][Bl]{\scalebox{.8}{$M$}}
\psfrag{Y}[Bl][Bl]{\scalebox{.8}{$Y$}}
\psfrag{Z}[Br][Br]{\scalebox{.8}{$Y$}}
\psfrag{r}[Bc][Bc]{\scalebox{.9}{$r$}}
\psfrag{n}[Bc][Bc]{\scalebox{.9}{$\xi_Y$}}
\rsdraw{.45}{.9}{partial-factorization-2} \;\,=\;\,
\rsdraw{.45}{.9}{partial-factorization-1} \,\; .
$$
Also, the parameter theorem for coends (see \cite{ML1}) implies that the family of coends $\{Z(X)\}_{X \in \cc}$ uniquely extend to a functor $Z \co \cc \to \cc$ so that~$\partial=\{\partial_{X,Y}\}_{X,Y \in \cc}$ is natural in $X$ and~$Y$.

By \cite[Corollary 5.14 and Theorem 6.5]{BV3}, the functor $Z$ has the structure of a quasitriangular Hopf monad, called the \emph{central Hopf monad} of $\cc$, which describes the center $\zz(\cc)$ of $\cc$ (meaning that the Eilenberg-Moore category of $Z$ is isomorphic to $\zz(\cc)$ as braided monoidal categories). The product $m$, unit $u$, and comonoidal structure $(Z_2,Z_0)$  are characterized (using the universal factorization property for~$\partial$) by the following equalities
with $X,X_1,X_2,Y,Y_1,Y_2\in\cc$:
\begin{center}
\psfrag{A}[Bc][Bc]{\scalebox{.8}{$Y_1$}}
\psfrag{B}[Bc][Bc]{\scalebox{.8}{$Y_2$}}
\psfrag{X}[Bc][Bc]{\scalebox{.8}{$X$}}
\psfrag{U}[Bc][Bc]{\scalebox{.8}{$Y_1 \otimes Y_2$}}
\psfrag{Z}[Bc][Bc]{\scalebox{.8}{$Z(X)$}}
\psfrag{r}[Bc][Bc]{\scalebox{.9}{$m_X$}}
\rsdraw{.45}{.9}{Z-prod-1} \, $=$ \, \rsdraw{.45}{.9}{Z-prod-2},
 \qquad \qquad
\psfrag{X}[Bc][Bc]{\scalebox{.8}{$X$}}
\psfrag{U}[Bc][Bc]{\scalebox{.8}{$\un$}}
\psfrag{Z}[Bc][Bc]{\scalebox{.8}{$Z(X)$}} $u_X
= $ \rsdraw{.45}{.9}{Z-unit-1},  \\[1em]
\psfrag{A}[Bc][Bc]{\scalebox{.8}{$Z(X_1)$}}
\psfrag{B}[Bc][Bc]{\scalebox{.8}{$Z(X_2)$}}
\psfrag{Y}[Bc][Bc]{\scalebox{.8}{$Y$}}
\psfrag{X}[Bc][Bc]{\scalebox{.8}{$X_1 \otimes X_2$}}
\psfrag{E}[Bc][Bc]{\scalebox{.8}{$X_1$}}
\psfrag{H}[Bc][Bc]{\scalebox{.8}{$X_2$}}
\psfrag{T}[Bc][Bc]{\scalebox{.9}{$Z_2(X_1,X_2)$}}
\rsdraw{.45}{.9}{Z-delta-1} \, $=$ \, \rsdraw{.45}{.9}{Z-delta-2},
\qquad \qquad \psfrag{Y}[Bc][Bc]{\scalebox{.8}{$Y$}}
\psfrag{C}[Bc][Bc]{\scalebox{.8}{$Z(\un)$}}
\psfrag{r}[Bc][Bc]{\scalebox{.9}{$Z_0$}}
\rsdraw{.45}{.9}{Z-Z0-1} \, $=$ \, \rsdraw{.45}{.9}{Z-Z0-2}\;.
\end{center}
Note that the left and right antipodes and $R$-matrix of $Z$ can similarly be described (see~\cite{BV3}), but we do not recall these descriptions since we do not use them in the sequel.

\subsection{Proof of Theorem~\ref*{thm-chromatic}}\label{sect-proof-chromatic}
Note that a left chromatic map  based at a projective object $P$ for a projective generator $G$ is nothing but a right chromatic map based at $P$ for $G$ in the finite tensor category $\cc^{\otimes \opp}=(\cc,\otimes^\opp,\un)$. Thus we only need to prove the existence of right chromatic maps.

Since  the category $\cc$ has a projective generator, the coend \eqref{eq-coend-Z} exists for all $X \in \cc$ (by \cite[Lemma 5.1.8]{KL}). By Section~\ref{sect-central-Hopf-monad}, we can then consider the central Hopf monad $Z=(Z,m,u,Z_2,Z_0)$ of~$\cc$ and its associated centralizer $\partial=\{\partial_{X,Y}\co X \otimes Y \to Y \otimes Z(X)\}_{X,Y \in \cc}$.

Recall the natural transformation $\Lambda^r=\{\Lambda_Y^r \co \alpha \otimes Y \to Y\}_{Y \in \cc}$ from Lemma~\ref{lem-Lambda}. The universal factorization property for  $\partial$ gives that there is a unique morphism $\Lambda_r\co Z(\alpha) \to \un$ in $\cc$ such that $\Lambda_Y^r=(\id_Y \otimes \Lambda_r)\partial_{\alpha,Y}$ for all $Y\in \cc$:
$$
\psfrag{Y}[Bl][Bl]{\scalebox{.8}{$Y$}}
\psfrag{e}[Br][Br]{\scalebox{.8}{$\alpha$}}
\psfrag{Z}[Br][Br]{\scalebox{.8}{$Y$}}
\psfrag{r}[Bc][Bc]{\scalebox{.9}{$\Lambda_r$}}
\psfrag{n}[Bc][Bc]{\scalebox{.9}{$\Lambda^r_Y$}}
\rsdraw{.45}{.9}{partial-Lambda-2} \;\,=\;\,
\rsdraw{.45}{.9}{partial-Lambda-1} \,\; .
$$

\begin{lem}\label{lem-Ar-integ}
The morphism $\Lambda_r\co Z(\alpha) \to \un$ is a nonzero right $\alpha$-integral for $Z$.
\end{lem}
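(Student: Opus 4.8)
The plan is to verify that $\Lambda_r$ satisfies the two requirements in the definition of a nonzero right $\alpha$-integral for the bimonad $Z=(Z,m,u,Z_2,Z_0)$: that $\Lambda_r\neq 0$, and that
\[
(\Lambda_r \otimes \id_{Z(\un)})\,Z_2(\alpha,\un)= u_\un \Lambda_r \co Z(\alpha) \to \un .
\]
For the first point, if $\Lambda_r=0$ then $\Lambda^r_{P_0}=(\id_{P_0}\otimes\Lambda_r)\,\partial_{\alpha,P_0}=0$; but $\Lambda^r_{P_0}=\eta\otimes\varepsilon=(\eta\otimes\id_{\un})(\id_\alpha\otimes\varepsilon)$ is the composite of an epimorphism (as $\varepsilon$ is epi) with a nonzero monomorphism (as $\eta$ is mono and $\alpha\neq 0$), hence is nonzero---a contradiction.

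For the integral equation, I would first reduce it to a statement about $\Lambda^r$ by means of the universal factorization property of the centralizer $\partial$. Both sides being morphisms $Z(\alpha)\to \un$, the universal property of $\partial_{\alpha,-}$ shows that the equation holds if and only if, for all $Y\in\cc$,
\[
\bigl(\id_Y\otimes (\Lambda_r\otimes\id_{Z(\un)})Z_2(\alpha,\un)\bigr)\partial_{\alpha,Y}=\bigl(\id_Y\otimes u_\un\Lambda_r\bigr)\partial_{\alpha,Y}.
\]
Inserting the description of $Z_2$ recalled in Section~\ref{sect-central-Hopf-monad} (which for $X_1=\alpha$, $X_2=\un$ reads $(\id_Y\otimes Z_2(\alpha,\un))\partial_{\alpha,Y}=(\partial_{\alpha,Y}\otimes\id_{Z(\un)})(\id_\alpha\otimes\partial_{\un,Y})$, using $\alpha\otimes\un=\alpha$) together with the defining relation $\Lambda^r_Y=(\id_Y\otimes\Lambda_r)\partial_{\alpha,Y}$, the left-hand side collapses to $(\Lambda^r_Y\otimes\id_{Z(\un)})(\id_\alpha\otimes\partial_{\un,Y})$ and the right-hand side to $(\id_Y\otimes u_\un)\Lambda^r_Y$. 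Thus everything reduces to proving
\[
(\Lambda^r_Y\otimes\id_{Z(\un)})(\id_\alpha\otimes\partial_{\un,Y})=(\id_Y\otimes u_\un)\,\Lambda^r_Y \co \alpha\otimes Y\to Y\otimes Z(\un)
\]
for every $Y\in\cc$; I denote this identity by $(\ast)$.

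Next I would prove $(\ast)$. Both sides are natural transformations from the exact functor $\alpha\otimes-$ to the functor $-\otimes Z(\un)$, so by Lemma~\ref{lem-extension} it is enough to check $(\ast)$ on indecomposable projective objects $Y=P$. If $P\not\cong P_0$, then $\Lambda^r_P=0$ by Lemma~\ref{lem-Lambda} and both sides of $(\ast)$ vanish. For $P=P_0$, I would expand $\partial_{\un,P_0}=(\id_{P_0}\otimes i_{\un,P_0})(\lcoev_{P_0}\otimes\id_{P_0})$ and $\Lambda^r_{P_0}=\eta\otimes\varepsilon$; a short bookkeeping with the strands then identifies the left-hand side of $(\ast)$ with $\eta\otimes\bigl(i_{\un,P_0}(\ldual{\varepsilon}\otimes\id_{P_0})\bigr)$, where the zig-zag identities give $(\varepsilon\otimes\id_{\ldual{P_0}})\lcoev_{P_0}=\ldual{\varepsilon}\co\ldual{\un}\to\ldual{P_0}$, and identifies the right-hand side with $\eta\otimes(u_\un\varepsilon)$. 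Applying the dinaturality of the universal family $\{i_{\un,Y}\}_{Y\in\cc}$ to the morphism $\varepsilon\co P_0\to\un$ and using $u_\un=i_{\un,\un}$ (the description of the unit of $Z$ recalled in Section~\ref{sect-central-Hopf-monad}) yields $i_{\un,P_0}(\ldual{\varepsilon}\otimes\id_{P_0})=i_{\un,\un}\varepsilon=u_\un\varepsilon$, so the two sides of $(\ast)$ agree. This proves $(\ast)$, hence the integral equation, and, together with the first point, the lemma.

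I expect the main obstacle to be the first reduction: one must transport the comonoidal constraint $Z_2$ correctly through the universal property of the coend, keeping careful track of which strand each morphism acts on---in particular, distinguishing the ``fresh'' copy of $Y$ produced by the coevaluation inside $\partial_{\un,Y}$ from the original one. Once $(\ast)$ is isolated, the remaining work---reducing to projectives via Lemma~\ref{lem-extension} and, on $P_0$, unwinding the zig-zag identities and the dinaturality of the coend---is routine.
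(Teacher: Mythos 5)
Your proposal is correct and follows essentially the same route as the paper: reduce the integral identity via the universal factorization property of $\partial$ to the identity $(\Lambda^r_Y\otimes\id_{Z(\un)})(\id_\alpha\otimes\partial_{\un,Y})=(\id_Y\otimes u_\un)\Lambda^r_Y$, verify it on indecomposable projectives (zero away from $P_0$, and at $P_0$ via the (di)naturality of the coend together with $u_\un=i_{\un,\un}$), and conclude by Lemma~\ref{lem-extension}. The only cosmetic difference is that you unwind the dinaturality of $i_{\un,-}$ at $\varepsilon$ directly where the paper invokes the naturality of $\partial$ in $Y$, which is the same fact.
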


\begin{proof}
Clearly $\Lambda_r\neq 0$ since $\Lambda^r$ is nonzero (because $\Lambda^r_{P_0}=\eta \otimes \varepsilon\neq 0$).
We need to prove that $(\Lambda_r \otimes \id_{Z(\un)})Z_2(\alpha,\un)= u_\un  \Lambda_r$. It follows from the universal factorization property for~$\partial$ that this amounts to showing the equality of the natural transformations $l=\{l_Y\}_{Y \in \cc}$ and $r=\{r_Y\}_{Y \in \cc}$ defined by
$$
l_Y=\bigl(\id_Y \otimes( \Lambda_r \otimes \id_{Z(\un)})Z_2(\alpha,\un)\bigr)\partial_{\alpha,Y} \quad \text{and} \quad r_Y=(\id_Y \otimes u_\un  \Lambda_r)\partial_{\alpha,Y}.
$$
Note that the definitions of $\Lambda_r$ and $Z_2(\alpha,\un)$ imply that $ r_Y=(\id_Y \otimes u_\un)\Lambda^r_Y$ and
$$
l_Y=\bigl((\id_Y \otimes \Lambda_r)\partial_{\alpha,Y} \otimes \id_{Z(\un)}\bigr)(\id_\alpha \otimes \partial_{\un,Y})
= (\Lambda^r_Y \otimes \id_{Z(\un)})(\id_\alpha \otimes \partial_{\un,Y}).
$$
Then $l_P=0=r_P$ for any indecomposable projective object $P$ non isomorphic to~$P_0$ (since
$\Lambda^r_P=0$). Also
$$
l_{P_0}\overset{(i)}{=} \eta \otimes \bigl((\varepsilon \otimes \id_{Z(\un)})\partial_{\un,P_0}\bigr) \overset{(ii)}{=} \eta \otimes \partial_{\un,\un}\varepsilon\overset{(iii)}{=} (\id_{P_0} \otimes u_\un)(\eta \otimes \varepsilon) \overset{(iv)}{=}r_{P_0}.
$$
Here  $(i)$ and $(iv)$ follow from the equality $\Lambda^r_{P_0}=\eta \otimes \varepsilon$, $(ii)$ from the naturality of~$\partial$, and $(iii)$ from the definition of $u_\un$. Consequently, using that any projective object is a (finite) direct sum of indecomposable projective objects, we obtain that $l_P=r_P$ for all $P \in \Proj$. Finally we conclude that $l=r$ by applying Lemma~\ref{lem-extension} with the functors $F=\alpha \otimes -$  and $G=-\otimes Z(\un)$.
\end{proof}

Since the central Hopf monad $Z$ is the central Hopf comonad for the finite tensor category $\cc^\opp$ opposite to $\cc$, it follows from Lemma~\ref{lem-Ar-integ} and \cite[Theorem 4.8]{Sh2} that  there is a unique $\alpha$-cointegral $\lambda \co \un \to Z(\alpha)$ such that $\Lambda_r\lambda=\id_\un$. 

\begin{lem}\label{lem-lambda-Lambda-id}
For any $X\in\cc$, $(\id_X \otimes \Lambda_rm_\alpha)\partial_{Z(\alpha),X}(\lambda \otimes \id_X)=\id_X$.
\end{lem}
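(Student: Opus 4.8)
The plan is to peel off, one at a time, the three defining properties at our disposal — the naturality of the centralizer $\partial$, the cointegral equation $m_\alpha Z(\lambda)=\lambda Z_0$, and the normalization $\Lambda_r\lambda=\id_\un$ — and then land on the universal characterization of $Z_0$. I expect no serious obstacle here: the verification is a short chain of rewrites, and the only care needed is bookkeeping with the direction of the naturality square for $\partial$ and recalling precisely which equation characterizes $Z_0$.

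First I would use the naturality of $\partial$ in its first variable, applied to the morphism $\lambda\co\un\to Z(\alpha)$, to rewrite
$$
\partial_{Z(\alpha),X}(\lambda\otimes\id_X)=(\id_X\otimes Z(\lambda))\,\partial_{\un,X}.
$$
Substituting this into the left-hand side of the lemma turns it into $(\id_X\otimes\Lambda_r m_\alpha Z(\lambda))\,\partial_{\un,X}$. Since $\lambda$ is an $\alpha$-cointegral for the central Hopf monad $Z$, we have $m_\alpha Z(\lambda)=\lambda Z_0$, so this becomes $(\id_X\otimes\Lambda_r\lambda\,Z_0)\,\partial_{\un,X}$; and the defining relation $\Lambda_r\lambda=\id_\un$ reduces it further to $(\id_X\otimes Z_0)\,\partial_{\un,X}$.

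Finally I would invoke the characterization of $Z_0$ recalled in Section~\ref{sect-central-Hopf-monad}: the morphism $Z_0\co Z(\un)\to\un$ is the unique one such that $(\id_Y\otimes Z_0)\,\partial_{\un,Y}=\id_Y$ for all $Y\in\cc$, this being the factorization of the identity natural transformation $\{\id\co\un\otimes Y\to Y\otimes\un\}_Y$ (given by the unit constraints) through $\partial_{\un,-}$. Applying this with $Y=X$ gives $(\id_X\otimes Z_0)\,\partial_{\un,X}=\id_X$, which completes the proof. The same computation can be rendered graphically straight from the defining pictures of $m$, $u$ and $Z_0$ together with the universal factorization property of $\partial$, should a diagrammatic proof be preferred.
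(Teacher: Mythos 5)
Your proposal is correct and follows exactly the same chain of rewrites as the paper's proof: naturality of $\partial$ applied to $\lambda$, the cointegral identity $m_\alpha Z(\lambda)=\lambda Z_0$, the normalization $\Lambda_r\lambda=\id_\un$, and the defining property of $Z_0$. Nothing to add.
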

\begin{proof}
We have:
\begin{gather*}
(\id_X \otimes \Lambda_rm_\alpha)\partial_{Z(\alpha),X}(\lambda \otimes \id_X) \overset{(i)}{=}
(\id_X \otimes \Lambda_rm_\alpha Z(\lambda))\partial_{\un,X} \\ \overset{(ii)}{=} (\id_X \otimes \Lambda_r\lambda Z_0)\partial_{\un,X} \overset{(iii)}{=} (\id_X \otimes Z_0)\partial_{\un,X}  \overset{(iv)}{=}\id_X.
\end{gather*}
Here $(i)$ follows from the naturality of~$\partial$, $(ii)$ from the fact that $m_\alpha Z(\lambda)= \lambda Z_0$ (because $\lambda$ is an $\alpha$-cointegral),  $(iii)$ from the equality $\lambda\Lambda_r=\id_\un$, and $(iv)$ from the definition of $Z_0$.
\end{proof}

Let $P$ be a projective object and $G$ be a  projective generator of~$\cc$. Set
$$
a_P=(\id_P \otimes \rev_{G} \otimes \id_{Z(\alpha)})(\id_{P\otimes G} \otimes \partial_{\alpha,\rdual{G}}) \co P \otimes G \otimes \alpha \otimes \rdual{G} \to P \otimes Z(\alpha).
$$
Graphically,
$$
\psfrag{P}[Br][Br]{\scalebox{.8}{$P$}}
\psfrag{G}[Br][Br]{\scalebox{.8}{$G$}}
\psfrag{a}[Br][Br]{\scalebox{.8}{$\alpha$}}
\psfrag{H}[Bl][Bl]{\scalebox{.8}{$\rdual{G}$}}
\psfrag{Z}[Bl][Bl]{\scalebox{.8}{$Z(\alpha)$}}
\psfrag{u}[Bc][Bc]{\scalebox{.9}{$\rev_{G}$}}
a_P=\rsdraw{.45}{.9}{ap}\;.
$$
\begin{lem}\label{lem-ap-epi}
$a_P$ is an epimorphism.
\end{lem}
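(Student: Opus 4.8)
The plan is to reduce the statement to the fact that, for any generator $H$ of $\cc$, the universal dinatural morphism $i_{\alpha,H}\co \ldual{H}\otimes\alpha\otimes H\to Z(\alpha)$ of the coend $Z(\alpha)$ is an epimorphism, and then to apply this with $H=\rdual{G}$.

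First I would unfold the definitions. Substituting $\partial_{\alpha,\rdual{G}}=(\id_{\rdual{G}}\otimes i_{\alpha,\rdual{G}})(\lcoev_{\rdual{G}}\otimes\id_{\alpha\otimes\rdual{G}})$ into the definition of $a_P$, the outer evaluation $\rev_G$ together with the coevaluation $\lcoev_{\rdual{G}}$ forms a cap-cup on the $G$-strand which, by the zig-zag identity, is exactly the canonical isomorphism $\kappa_G\co G\to\ldual{(\rdual{G})}$ comparing the two left duals $G$ and $\ldual{(\rdual{G})}$ of $\rdual{G}$ (namely $\kappa_G=(\rev_G\otimes\id_{\ldual{(\rdual{G})}})(\id_G\otimes\lcoev_{\rdual{G}})$). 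Keeping careful track of the (co)evaluations and using the bifunctoriality of $\otimes$, this gives the identity
$$a_P=(\id_P\otimes i_{\alpha,\rdual{G}})\bigl(\id_P\otimes\kappa_G\otimes\id_{\alpha\otimes\rdual{G}}\bigr).$$
Since $\id_P\otimes\kappa_G\otimes\id_{\alpha\otimes\rdual{G}}$ is an isomorphism and the functor $P\otimes-$ preserves epimorphisms (it has a right adjoint), it suffices to prove that $i_{\alpha,\rdual{G}}$ is an epimorphism.

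Next I would prove the general claim that $i_{X,H}\co \ldual{H}\otimes X\otimes H\to Z(X)$ is an epimorphism for any $X\in\cc$ and any object $H$ such that every object of $\cc$ is a quotient of $H^{\oplus n}$ for some $n\in\N$. Writing $H^{\oplus n}$ as a biproduct and using the dinaturality of $\{i_{X,Y}\}_Y$ with respect to the inclusions and projections shows that $i_{X,H^{\oplus n}}$ and $i_{X,H}$ have the same image in $Z(X)$. For an arbitrary $Y\in\cc$, picking an epimorphism $p\co H^{\oplus n}\to Y$ and using dinaturality with respect to $p$ (along with the fact that $\ldual{Y}\otimes X\otimes-$ preserves epimorphisms) yields $\mathrm{im}(i_{X,Y})\subseteq\mathrm{im}(i_{X,H})$ for all $Y$. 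Consequently, composing the cokernel $q\co Z(X)\to\mathrm{coker}(i_{X,H})$ with each $i_{X,Y}$ produces the zero dinatural family, so by the universal property of $Z(X)$ the morphism $q$ equals $0$; hence $\mathrm{coker}(i_{X,H})=0$, i.e.\ $i_{X,H}$ is an epimorphism. (Alternatively this follows at once from the construction of $Z(X)$ as a coequalizer whose middle term is $\ldual{G}\otimes X\otimes G$, cf.\ \cite[Lemma 5.1.8]{KL}.)

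Finally I would apply this with $X=\alpha$ and $H=\rdual{G}$, after recalling that $\rdual{G}$ is again a projective generator of $\cc$: it is projective because duals of projective objects are projective in a finite tensor category, and it is a generator because $\rdual{(-)}$ permutes the indecomposable projective objects, so every indecomposable projective object of $\cc$ (in particular the projective cover of each simple object) is a direct summand of $\rdual{G}$. Then $i_{\alpha,\rdual{G}}$ is an epimorphism, and the first step finishes the argument. I expect the only mildly delicate point to be the bookkeeping of left versus right duals in the first step, since $\cc$ is not assumed pivotal; everything else is routine.
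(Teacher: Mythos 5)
Your proof is correct and follows essentially the same route as the paper: you factor $a_P$ as $(\id_P\otimes i_{\alpha,\rdual{G}})$ precomposed with the isomorphism $\id_P\otimes\varphi_G\otimes\id_{\alpha\otimes\rdual{G}}$, where $\varphi_G=(\rev_G\otimes\id_{\ldual{}(\rdual{G})})(\id_G\otimes\lcoev_{\rdual{G}})$, and reduce to the fact that $i_{\alpha,\rdual{G}}$ is an epimorphism because $\rdual{G}$ is a projective generator. The only difference is that you prove this last fact directly (correctly) where the paper simply cites \cite[Corollary 5.1.8]{KL}.
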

\begin{proof}
Since $\rdual{G}$ is a projective generator of $\cc$, the universal dinatural transformation $i_{\alpha,\rdual{G}}\co \ldual{}(\rdual{G}) \otimes \alpha \otimes \rdual{G} \to Z(\alpha)$ is an epimorphism (by \cite[Corollary 5.1.8]{KL}). Then $b_P=\id_P \otimes i_{\alpha,\rdual{G}}$ is an epimorphism (since $\otimes$ is exact because $\cc$ is rigid).
Considering the isomorphism $\varphi_G=(\rev_G \otimes \id_{\ldual{}(\rdual{G})})(\id_G \otimes \lcoev_{\rdual{G}})\co G \to \ldual{}(\rdual{G})$, we conclude that $a_P=b_P(\id_P \otimes \varphi_G \otimes \id_{\alpha \otimes \rdual{G}})$ is an epimorphism.
\end{proof}

Since $a_P$ is an epimorphism (by Lemma~\ref{lem-ap-epi}) and $P$ is a projective object, the morphism $\id_P \otimes \lambda \co P \to P \otimes Z(\alpha)$ factors through $a_P$, that is, $\id_P \otimes \lambda=a_Pd_P$ for some morphism $d_P \co P \to P \otimes G \otimes \alpha \otimes \rdual{G}$. Set
$$
\chr_P^r=(\id_{} \otimes \rev_{\rdual{G}})(d_P \otimes \id_{\rrdual{G}}) \co P \otimes \rrdual{G} \to P \otimes G \otimes \alpha.
$$
Graphically,
$$
\psfrag{P}[Br][Br]{\scalebox{.8}{$P$}}
\psfrag{G}[Br][Br]{\scalebox{.8}{$G$}}
\psfrag{a}[Br][Br]{\scalebox{.8}{$\alpha$}}
\psfrag{H}[Br][Br]{\scalebox{.8}{$\rdual{G}$}}
\psfrag{Z}[Bl][Bl]{\scalebox{.8}{$Z(\alpha)$}}
\psfrag{X}[Bl][Bl]{\scalebox{.8}{$\rrdual{G}$}}
\psfrag{u}[Bc][Bc]{\scalebox{.9}{$\rev_{\rdual{G}}$}}
\psfrag{v}[Bc][Bc]{\scalebox{.9}{$d_P$}}
\chr_P^r=\rsdraw{.45}{.9}{cp}\;.
$$
Then $\chr_P^r$ is a right chromatic map based at $P$ for $G$. Indeed, for any $X \in \cc$,
\begin{gather*}
\psfrag{P}[Br][Br]{\scalebox{.8}{$P$}}
\psfrag{G}[Br][Br]{\scalebox{.8}{$G$}}
\psfrag{a}[Br][Br]{\scalebox{.8}{$\alpha$}}
\psfrag{H}[Bl][Bl]{\scalebox{.8}{$\rrdual{G}$}}
\psfrag{X}[Bl][Bl]{\scalebox{.8}{$X$}}
\psfrag{Q}[Bl][Bl]{\scalebox{.8}{$P$}}
\psfrag{R}[Br][Br]{\scalebox{.8}{$\rdual{G}$}}
\psfrag{u}[Bc][Bc]{\scalebox{.9}{$\rev_{G}$}}
\psfrag{e}[Bc][Bc]{\scalebox{.9}{$\rcoev_{\rdual{G}}$}}
\psfrag{c}[Bc][Bc]{\scalebox{.9}{$\chr_P^r$}}
\psfrag{A}[Bc][Bc]{\scalebox{.9}{$\Lambda^r_{\rdual{G} \otimes X}$}}
\rsdraw{.45}{.9}{cpr-demo0}\;
\psfrag{u}[Bc][Bc]{\scalebox{.9}{$\rev_{G}$}}
\psfrag{e}[Bc][Bc]{\scalebox{.9}{$d_P$}}
\psfrag{c}[Bc][Bc]{\scalebox{.9}{$\id_{\rdual{G} \otimes X}$}}
\psfrag{r}[Bc][Bc]{\scalebox{.9}{$\Lambda_r$}}
\psfrag{R}[Bl][Bl]{\scalebox{.8}{$\rdual{G}$}}
\psfrag{X}[Bl][Bl]{\scalebox{.8}{$X$}}
\psfrag{P}[Br][Br]{\scalebox{.8}{$P$}}
\psfrag{G}[Br][Br]{\scalebox{.8}{$G$}}
\psfrag{a}[Br][Br]{\scalebox{.8}{$\alpha$}}
\overset{(i)}{=}\;\rsdraw{.45}{.9}{cpr-demo1}
\psfrag{u}[Bc][Bc]{\scalebox{.9}{$\rev_{G}$}}
\psfrag{e}[Bc][Bc]{\scalebox{.9}{$d_P$}}
\psfrag{r}[Bc][Bc]{\scalebox{.9}{$\Lambda_r m_\alpha$}}
\psfrag{R}[Bl][Bl]{\scalebox{.8}{$\rdual{G}$}}
\psfrag{X}[Bl][Bl]{\scalebox{.8}{$X$}}
\psfrag{P}[Br][Br]{\scalebox{.8}{$P$}}
\psfrag{G}[Br][Br]{\scalebox{.8}{$G$}}
\psfrag{a}[Br][Br]{\scalebox{.8}{$\alpha$}}
\psfrag{Y}[Bl][Bl]{\scalebox{.8}{$\rdual{G} \otimes X$}}
\overset{(ii)}{=}\;\rsdraw{.45}{.9}{cpr-demo2}\\[1em]
\psfrag{c}[Bc][Bc]{\scalebox{.9}{$a_P$}}
\psfrag{e}[Bc][Bc]{\scalebox{.9}{$d_P$}}
\psfrag{r}[Bc][Bc]{\scalebox{.9}{$\Lambda_r m_\alpha$}}
\psfrag{R}[Br][Br]{\scalebox{.8}{$\rdual{G}$}}
\psfrag{X}[Bl][Bl]{\scalebox{.8}{$X$}}
\psfrag{Y}[Br][Br]{\scalebox{.8}{$X$}}
\psfrag{P}[Br][Br]{\scalebox{.8}{$P$}}
\psfrag{G}[Br][Br]{\scalebox{.8}{$G$}}
\psfrag{a}[Br][Br]{\scalebox{.8}{$\alpha$}}
\overset{(iii)}{=}\;\rsdraw{.45}{.9}{cpr-demo3}\;
\psfrag{c}[Bc][Bc]{\scalebox{.9}{$\lambda$}}
\psfrag{r}[Bc][Bc]{\scalebox{.9}{$\Lambda_r m_\alpha$}}
\psfrag{X}[Bl][Bl]{\scalebox{.8}{$X$}}
\psfrag{Y}[Br][Br]{\scalebox{.8}{$X$}}
\psfrag{P}[Br][Br]{\scalebox{.8}{$P$}}
\overset{(iv)}{=}\;\rsdraw{.45}{.9}{cpr-demo4}\;
\psfrag{X}[Bl][Bl]{\scalebox{.8}{$X$}}
\psfrag{P}[Br][Br]{\scalebox{.8}{$P$}}
\overset{(v)}{=}\;\rsdraw{.45}{.9}{cpr-demo5}\;.
\end{gather*}
Here  $(i)$ follows from the definitions of $\chr_P^r$ and $\Lambda_r$, $(ii)$ from the definition of the product $m$ of $Z$, $(iii)$ from the definition of $a_P$, $(iv)$ from the fact that $a_Pd_P=\id_P \otimes \lambda$, and $(v)$ from Lemma~\ref{lem-lambda-Lambda-id}.

\subsection{Proof of Lemma~\ref*{lem-Lambdas-comparison}}\label{proof-lem-Lambdas-comparison}
Since $\Hom_\cc(P_0,\un)=\kk \, \varepsilon$,  $\Hom_\cc(\un,P_0)=\kk\,\eta$,  and $\mt_{P_0}(\eta \varepsilon)=1_\kk$, the definition of $\Lambda^\mt$ gives that $\Lambda^\mt_{P_0}=\eta \varepsilon$. Using that $\eta \varepsilon=\eta \otimes \varepsilon= \varepsilon \otimes \eta$, we get that $\Lambda^\mt_{P_0}=\Lambda^r_{P_0}=\Lambda^l_{P_0}$.

Let $P$ be an indecomposable projective object non isomorphic to $P_0$. Since $\Hom_\cc(P,\un)=0=\Hom_\cc(\un,P)$, the definition of $\Lambda^\mt$ gives that $\Lambda^\mt_P=0$, and so we get that $\Lambda^\mt_P=\Lambda^r_P=\Lambda^l_P$.

Since any projective object is a (finite) direct sum of indecomposable projective objects, the above equalities together with the naturality $\Lambda^\mt$, $\Lambda^r$, $\Lambda^l$ implies that $\Lambda^\mt_P=\Lambda^r_P=\Lambda^l_P$ for all $P \in \Proj$.

\subsection{Chromatic maps in $H$-mod}\label{proof-chromatic-H-mod}
We first prove that $\Lambda$ defined in Example \ref{exa-H-mod}
is a non zero left cointegral of $H$.
It follows from  \cite[Proposition 10.6.2.]{R2012} that the set  $L_\alpha=\{x\in H \, | \, hx=\alpha_H(h)x\}$ is a one
dimensional left ideal of $H$ which is equal to the set of right cointegrals of~$H$.
Since $S^{-1}(\Lambda)=i\eta(1_\kk)\neq0$ and $i\eta$ is $H$-linear,
$L_\alpha$ is generated by $S^{-1}(\Lambda)$ which is then a nonzero right
cointegral.  Consequently, $\Lambda=SS^{-1}(\Lambda)$ is a nonzero left cointegral.

Let us now prove that the  $\kk$-linear  homomorphism
$$
 \chr_H^l \co \left \{\begin{array}{ccl} \lldual{H}\otimes H & \to & \alpha\otimes H\otimes H \\ e_x \otimes y & \mapsto &  \lambda(S(y_{(1)})x)\alpha_{H}(y_{(2)})\otimes y_{(3)}\otimes y_{(4)}
  \end{array}\right. 
$$
from Example~\ref{exa-H-mod} is a  left chromatic map based at $H$ for $H$. Notice first that $\chr_H^l$ is $H$-linear. Indeed, for any $x,y,h \in H$,
\begin{align*}
\chr_H^l \bigl(h \cdot(e_x \otimes y)\bigr) 
&\overset{(i)}{=} \chr_H^l (e_{S^2(h_{(1)})x} \otimes h_{(2)}y) \\
&\overset{(ii)}{=}  \lambda\bigl(S(h_{(2)}y_{(1)}) S^2(h_{(1)})x\bigr)\, \alpha_{H}( h_{(3)}y_{(2)})\otimes  h_{(4)}y_{(3)}\otimes  h_{(5)}y_{(4)}   \\
&\overset{(iii)}{=} \lambda\bigl(S\bigl(S(h_{(1)})h_{(2)}y_{(1)}\bigr) x\bigr)\, \alpha_{H}( h_{(3)}y_{(2)})\otimes  h_{(4)}y_{(3)}\otimes  h_{(5)}y_{(4)}   \\
&\overset{(iv)}{=} \lambda\bigl(S(y_{(1)}) x\bigr)\, \alpha_{H}( h_{(1)}y_{(2)})\otimes  h_{ (2)}y_{(3)}\otimes  h_{(3)}y_{(4)}   \\
&\overset{(v)}{=} \alpha_{H}( h_{(1)})\, \lambda\bigl(S(y_{(1)}) x\bigr) \, \alpha_{H}( y_{(2)})\otimes  h_{(2)}y_{(3)}\otimes  h_{(3)}y_{(4)} \\   
& \overset{(vi)}{=}  h \cdot \chr_H^l (e_x \otimes y).
\end{align*}
Here $(i)$ follows from the definition of the monoidal product in $H$-mod, $(ii)$ from the definition of $\chr_H^l$ and the multiplicativity of the coproduct of $H$,  $(iii)$ from the anti-multiplicativity of $S$, $(iv)$ from the axiom of the antipode, $(v)$ from
the multiplicativity of $\alpha_H$, and $(vi)$ from the definitions of $\chr_H^l$ and of the monoidal product in $H$-mod.

We next compute the natural transformation $\Lambda^l$. 
For any finite dimensional $H$-module~$M$, consider the $\kk$-linear  homomorphism
$$
 \tilde{\Lambda}_M^l \co \left \{\begin{array}{ccl} M\otimes\alpha & \to & M \\ m \otimes 1_\kk & \mapsto &  S^{-1}(\Lambda) \cdot m.
  \end{array}\right. 
$$
Then $\tilde{\Lambda}_M^l$ is $H$-linear. Indeed, for any $h \in H$ and $m \in M$,
\begin{gather*}
\tilde{\Lambda}^l_M(h \cdot(m\otimes 1_\kk)) \overset{(i)}{=}
\alpha_H(h_{(2)}) \, S^{-1}(\Lambda)h_{(1)} \cdot m \overset{(ii)}{=} \varepsilon_H(h_{(1)})\, \alpha_H(h_{(2)}) \, S^{-1}(\Lambda) \cdot m \\
\overset{(iii)}{=} \alpha_H(h)\, S^{-1}(\Lambda) \cdot m  
\overset{(iv)}{=}S^{-1}(\Lambda S(h)) \cdot m
\overset{(v)}{=} hS^{-1}(\Lambda) \cdot m 
 \overset{(vi)}{=}  h \cdot \tilde{\Lambda}^l_M(m\otimes 1_\kk),
\end{gather*}
where $\varepsilon_H$ is the counit of $H$.  Here $(i)$ follows from
the definitions of $ \tilde{\Lambda}_M^l$ and of the action of
$M \otimes \alpha$, $(ii)$ from the fact that $S^{-1}(\Lambda)$
is a right cointegral of $H$, $(iii)$ from the counitality of the
coproduct, $(iv)$ from the property characterizing $\alpha_H$ (see
Example~\ref{exa-H-mod}), $(v)$ from the anti-multiplicativity of $S$,
and $(vi)$ from the definition of~$\tilde{\Lambda}_M^l$. Clearly, the
family $\{\tilde{\Lambda}_M^l\}_M$ is natural in $M$. Now for
any $h\in H$,
$$
\tilde{\Lambda}_H^l(h\otimes 1_\kk)\overset{(i)}{=}S^{-1}(\Lambda)h\overset{(ii)}{=}\varepsilon_H(h)i\eta(1_\kk)\overset{(iii)}{=}(\varepsilon p\otimes
i\eta)(h\otimes 1_\kk).
$$
Here $(i)$ follows from the definition of $\tilde{\Lambda}_H^l$, $(ii)$ from the fact that $S^{-1}(\Lambda)$ is a right cointegral and from the definition of $\Lambda$, and $(iii)$ from the definition of $p$.
Thus, using that $pi=\id_{P_0}$ and the naturality of $\tilde{\Lambda}^l$, we obtain:
$$
\tilde{\Lambda}_{P_0}^l=\tilde{\Lambda}_{P_0}^l(pi \otimes \id_\alpha)=p\tilde{\Lambda}_{H}^l(i\otimes\id_\alpha)=\varepsilon
pi\otimes pi\eta=\varepsilon\otimes \eta.
$$
Also, for any indecomposable projective object $P$ non isomorphic
to $P_0$, we have $\tilde{\Lambda}_{P}^l=0$ since in the projective generator $H$, 
the image $\kk S^{-1}(\Lambda)=i\eta(\kk)\subset i(P_0)$ of $\tilde{\Lambda}_{H}^l$ is isomorphic to the simple $H$-module $\alpha$, and $i(P_0) \cong P_0$ is the only (up to isomorphism) indecomposable projective $H$-module which has a submodule isomorphic to $\alpha$ (by uniqueness of the socle, see Section~\ref{sect-finite-tensor}).
Consequently, the uniqueness in Lemma~\ref{lem-Lambda} implies that $\Lambda^l=\tilde{\Lambda}^l$.

Pick a projective cover $\varepsilon\co P_0 \to \kk$  of the unit object and a  monomorphism $\eta\co \alpha \to P_0$.
Since the counit
$\varepsilon_H\co H\to\kk$ of $H$ is an epimorphism, there exists an epimorphism
$p\co H\to P_0$ such that $\varepsilon_H=\varepsilon p$. Let $i\co P_0\to H$
be a section of $p$ in $H$-mod and set $\Lambda=S(i\eta(1_\kk))\in H$.
We prove in Section~\ref{proof-chromatic-H-mod} that $\Lambda$ is
a non zero left cointegral. By \cite[Theorem 10.2.2]{R2012}, there is a unique right integral
$\lambda\in H^*$ such that $\lambda(\Lambda)=1$.

We now prove that $\chr_H^l$ is a left chromatic map. Let $M$ be a finite dimensional $H$-module. Pick any $m \in M$ and $x \in H$. In $M\otimes\ldual H\otimes\alpha\otimes H\otimes H$, we have:
$$
(\id_{M\otimes\ldual{H}} \otimes \chr_H^l) (\id_{M}\otimes \lcoev_{\ldual{H}} \otimes
\id_H)(m\otimes x)=  m\otimes\lambda(S(x_{(1)})\_)\otimes\alpha_{H}(x_{(2)})\otimes
x_{(3)}\otimes x_{(4)}.
$$
Evaluating this vector under $(\id_M \otimes \lev_H \otimes \id_H)(\Lambda^l_{M \otimes \ldual{H}} \otimes \id_{H \otimes H})$ gives
\begin{align*}
\alpha_{H}&(x_{(2)})\, \lambda\bigl(S(x_{(1)})\, \Lambda_{(1)}x_{(3)}\bigr)\, \bigl( S^{-1}(\Lambda_{(2)})\cdot m \bigr) \otimes x_{(4)} \\
& \overset{(i)}{=} \alpha_{H}(x_{(2)})\, \alpha_{H}\bigl(S(x_{(3)})\bigr)\,\lambda\bigl(S^2(x_{(4)})S(x_{(1)})\, \Lambda_{(1)}\bigr)\, \bigl( S^{-1}(\Lambda_{(2)})\cdot m \bigr) \otimes x_{(5)} \\
& \overset{(ii)}{=} \lambda\bigl(S^2(x_{(2)})S(x_{(1)})\, \Lambda_{(1)}\bigr)\, \bigl( S^{-1}(\Lambda_{(2)})\cdot m \bigr) \otimes x_{(3)}\\
&  \overset{(iii)}{=} \varepsilon_H(x_{(1)}) \, \lambda(\Lambda_{(1)})\, \bigl( S^{-1}(\Lambda_{(2)})\cdot m \bigr) \otimes x_{(2)} \\
& \overset{(iv)}{=}\, \bigl( S^{-1}\bigl(\lambda(\Lambda_{(1)})\Lambda_{(2)}\bigr)\cdot m \bigr) \otimes x 
  \overset{(v)}{=}\, m \otimes x.
\end{align*}
Here $(i)$ follows from the fact that $\lambda(ab)=\alpha_{H}(S(b_{(1)}))\lambda(S^{2}(b_{(2)})a)$ for all $a,b \in H$ (see~\cite[Theorem 10.5.4]{R2012}), $(ii)$ from multiplicativity of $\alpha_H$ and the axiom of the antipode, $(iii)$ from the axiom of the antipode, $(iv)$ from the counitailty of the coproduct, and $(v)$ from the fact that $\lambda(\Lambda_{(1)})\Lambda_{(2)}=\lambda(\Lambda) 1_H=1_H$.
Consequently, 
$$
(\id_M \otimes \lev_H \otimes \id_H)(\Lambda^l_{M \otimes \ldual{H}} \otimes \id_{H \otimes H})(\id_{M \otimes \ldual{H}} \otimes \chr_H^l) (\id_M \otimes \lcoev_{\ldual{H}} \otimes \id_H)=\id_{M \otimes H},
$$
that is, $\chr_H^l$ is a left chromatic map based at $H$ for $H$.

Finally, the expression for $\chr_H^r$ is derived from that of $\chr_H^l$ by noticing that for any projective generator $G$ and projective object $P$ in $H$-mod, a right chromatic map based at $P$ for $G$ in $H$-mod is a left chromatic map at $P$ for $G$ in $(H$-mod$)^{\otimes \opp}$, that is, in $(H^{\mathrm{cop}})$-mod, where $H^{\mathrm{cop}}$ is $H$ with opposite coproduct (for which $S^{\mathrm{cop}}=S^{-1}$, $\Lambda^{\mathrm{cop}}=\Lambda$, $\lambda^{\mathrm{cop}}=\lambda S$, and $\alpha_{H^{\mathrm{cop}}}=\alpha_H$).

\subsubsection*{Acknowledgments}  F.C.\ is supported by Labex CIMI (ANR 11-LABX-0040) and the ANR Project CATORE (ANR-18-CE40-0024).
N.G.\ is supported by the Labex CEMPI (ANR-11-LABX-0007-01), IMT Toulouse,  and the NSF grant DMS-2104497.    A.V.\ is supported by the FNS-ANR grant OCHoTop (ANR-18-CE93-0002) and the Labex CEMPI (ANR-11-LABX-0007-01).

\end{document}